\documentclass[11pt]{amsart}
\usepackage{amssymb, latexsym}
\usepackage{graphicx}
\theoremstyle{plain}
\newtheorem{theorem}{Theorem}
\newtheorem{corollary}{Corollary}

\newtheorem {lemma}{Lemma}
\newtheorem{proposition}{Proposition}

\theoremstyle{remark}
\newtheorem*{remark}{Remark}
\newtheorem*{Remark 1}{Remark 1}
\newtheorem*{Remark 2}{Remark 2}
\newtheorem*{Remark 3}{Remark 3}
\newtheorem*{Remark 4}{Remark 4}

\numberwithin{equation}{section}

\begin{document}

\title[Card-Cyclic to Random Shuffle]%
 {Probabilistic and  Combinatorial Aspects of the Card-Cyclic to Random Insertion Shuffle}

\author{Ross G. Pinsky}
\address{Department of Mathematics\\
Technion---Israel Institute of Technology\\
Haifa, 32000\\ Israel}
\email{ pinsky@math.technion.ac.il}
\urladdr{http://www.math.technion.ac.il/~pinsky/}
\thanks{}

\subjclass[2000]{60C05, 05A05, 05A15} \keywords{random shuffle, random permutation, total variation norm}
\date{}

\begin{abstract}
Consider  a permutation  $\sigma\in S_n$ as a deck of cards numbered from 1 to $n$ and laid out in a row,
 where
$\sigma_j$ denotes the number of the card that is in the $j$-th
position from the left.\rm\ We study some probabilistic and combinatorial aspects of
the  shuffle on $S_n$ defined   by
removing and then randomly reinserting each of the $n$ cards once,
with the removal and reinsertion being performed according to the original left to right order
of the cards. The novelty here in this nonstandard shuffle is that every card is removed and reinserted exactly once.
The bias that remains turns out to be quite strong and possesses some surprising features.

\end{abstract}

\maketitle

\section{Introduction and Statement of Results}
Let $S_n$ denote  the symmetric group of permutations of $[n]\equiv\{1,\cdots, n\}$.
Our convention will be to  view  a permutation $\sigma\in S_n$ as a deck of cards numbered from 1 to $n$
and laid out in a row, where
$\sigma_j$ denotes the number of the card that is in the $j$-th position from the left.\rm\
In this paper, we analyze the bias in the following  ``shuffle'' on $n$ cards:
remove and then randomly reinsert each of the $n$ cards exactly once,
the removal and reinsertion  being performed according to the \it original\rm\ left to right order
of the cards.
The novelty here in this nonstandard shuffle  is that {\it every card is removed and reinserted exactly once}, unlike in any of the shuffles
one encounters in the literature. The point is to see how much bias remains when one knows that every card has been
removed and reinserted.

We dub this shuffle the  \it card-cyclic to random insertion shuffle\rm. The reason for this terminology along with
the original motivation that led to the study of this shuffle will be explained at the end of this section.
However, we feel that the results are of independent interest regardless of that motivation.

  We let $p_n(\sigma,\tau)$ denote the probability that the deck ends up
in the state $\tau\in S_n$, given that it began in state $\sigma\in S_n$. Of course, since the shuffle is transitive,
it suffices to look at $p_n(\text{id},\cdot)$, where id is the identity element, corresponding to the cards being
in increasing order from left to right.
Note that if $n\ge3$,
 the distribution after one such shuffle
can not be exactly uniform  because there are $n^n$ equally probable ways to implement the shuffle, but there are
$n!$ possible states of the deck, and $n!\nmid n^n$.
Of course this doesn't rule out asymptotic uniformity, but in fact we shall see that the
card-cyclic to random insertion shuffle is far from uniform.

We begin with the behavior of the distribution of the card in the first position and of the card in the last position.
The bias with regard to the first position turns out to be  quite strong.
\begin{theorem}\label{First}
Under $p_n(\text{id},\cdot)$, the random variable $\sigma_1$, denoting the number of the card in the first position, has the following
behavior:

\noindent i.
\begin{equation}
\lim_{n\to\infty}np_n(\text{id},\{\sigma_1=b_nn\})=e^{b-1}, \ \text{if}\ \ \lim_{n\to\infty}b_n=b\in(0,1].
\end{equation}
In particular then, defining the  probability measure $\nu^{\text{F}}_n$ on $[0,1]$ by
$$
\nu^{\text{F}}_n(A)=p_n(\text{id},\{\sigma_1\in nA\}), A\subset [0,1],
$$
one has
$$
w-\-lim_{n\to\infty}\nu^{\text{F}}_n(dx)=e^{-1}\delta_0(x)+e^{x-1}dx.
$$
\noindent ii
\begin{equation}
\begin{aligned}
&\lim_{n\to\infty}np_n(\text{id},\{\sigma_1=b_nn\})=e^{-1},\\
& \text{if}\ \lim_{n\to\infty} b_n=0\ \text{and}\
\liminf_{n\to\infty}\frac{n^{\frac12}}{\sqrt{\log n}}b_n>\sqrt2.
\end{aligned}
\end{equation}
\noindent iii.
\begin{equation}
\begin{aligned}
&\lim_{n\to\infty}n^\frac12p_n(\text{id},\{\sigma_1=d_nn^\frac12\})=e^{-1}\int_d^\infty e^{-\frac{y^2}2}dy,\\
&\text{if}\
 \lim_{n\to\infty}d_n=d\in[0,\infty].
\end{aligned}
\end{equation}
In particular then, defining the  sub-probability measure $\mu^{\text{F}}_n$ on $[0,\infty)$ by
$$
\mu^{\text{F}}_n(A)=p_n(\text{id},\{\sigma_1\in n^\frac12A\}), A\subset [0,\infty),
$$
one has
$$
w-\lim_{n\to\infty}\mu^{\text{F}}_n(dx)=e^{-1}\big(\int_x^\infty e^{-\frac{y^2}2}dy\big)dx,
$$
the total mass of the measure on the right hand side above being $e^{-1}$.
\end{theorem}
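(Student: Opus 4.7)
The plan is to realize the shuffle through independent uniform insertion positions $T_1, \dots, T_n \in \{1, \dots, n\}$, where $T_j$ is the position at which card $j$ is inserted at step $j$, and to prove the following combinatorial identification of $\sigma_1$: if some $T_j$ equals $1$, then $\sigma_1 = j^* := \max\{j : T_j = 1\}$, while otherwise $\sigma_1 = \ell^* := \max\{\ell : T_\ell + \ell = \min_m(T_m + m)\}$. The first statement is immediate: at step $j^*$ card $j^*$ is placed at position $1$, and each subsequent step removes a card from some position $\ne 1$ and reinserts it at a position $\ge 2$, leaving position $1$ unchanged. For the second statement I track the card at position $1$ through the shuffle: starting from card $1$, it changes at step $m$ only when the card being moved (card $m$) is itself at position $1$, producing an increasing chain $1 = J_1 < J_2 < \cdots$ whose next term $J_{i+1}$ is the card at position $2$ just before step $J_i$. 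The chain persists while that card is still untouched, and $\sigma_1$ is the first touched card to appear at position $2$. A short induction using the recursion $p_{\ell,m+1} = p_{\ell,m} - \mathbf{1}[T_m \ge p_{\ell,m}]$ for the position of an already-moved card $\ell$, initialized at $p_{\ell,\ell+1} = T_\ell$, shows that card $\ell$ first reaches position $2$ at step $T_\ell + \ell - 1$, and only provided $T_m + m > T_\ell + \ell$ holds for every $\ell < m < T_\ell + \ell - 1$. Formula (B) then drops out, with the largest-$\ell$ tiebreaker arising because for two cards $\ell_1 < \ell_2$ tied at the minimum, only the later one survives this no-interference test.

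This characterization yields the decomposition
$$
p_n(\mathrm{id}, \{\sigma_1 = k\}) = \frac{1}{n}\left(\frac{n-1}{n}\right)^{n-k} + P(\text{no } T_j = 1,\ \sigma_1 = k),
$$
whose first summand is exactly $P(j^* = k)$. For part (i) with $k = b_n n$ and $b \in (0,1]$, the first term gives $n p_n \to e^{b-1}$, while the second is $o(1/n)$ because the event that $\min_m(T_m + m)$ is attained at $k = \Theta(n)$ forces many $T_\ell$'s with $\ell < k$ to take atypically large values and so has exponentially small probability. For part (iii) with $k = d_n \sqrt{n}$ the first term contributes only $O(1/n)$, so I condition on $\{\text{no } T_j = 1\}$ (probability $\to e^{-1}$), under which the $T_\ell$ are iid uniform on $\{2, \dots, n\}$, and write
$$
P(\sigma_1 = k \mid \text{no}) = \sum_s \frac{1}{n-1}\prod_{\ell < k} P(T_\ell + \ell \ge s)\prod_{\ell > k} P(T_\ell + \ell > s).
$$
Substituting $s = v\sqrt{n}$ and Taylor-expanding the factors $1 - (s-\ell)/n$ gives
$$
\sum_{\ell < k}\log P(T_\ell + \ell \ge s) \sim -(vd - d^2/2),\qquad \sum_{\ell > k}\log P(T_\ell + \ell > s) \sim -(v-d)^2/2,
$$
summing to $-v^2/2$; the resulting Riemann sum converges to $\frac{1}{\sqrt{n}}\int_d^\infty e^{-y^2/2}\, dy$, and multiplying by $e^{-1}$ yields part (iii). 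Part (ii) uses the same Gaussian-tail estimate: when $b_n\sqrt{n/\log n} > \sqrt{2}$, the integral $\int_{b_n\sqrt{n}}^\infty e^{-y^2/2}\, dy$ is $o(1/\sqrt{n})$, so the second term is $o(1/n)$ and $np_n \to e^{-1}$ comes from the first alone. The weak-limit statements then follow by dominated convergence and the mass identity $\int_0^1 e^{x-1}\, dx + e^{-1} = 1$.

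The main obstacle is the rigorous justification of part (B) of the characterization---tracking the position-$2$ card through the shuffle under the no-$T_j=1$ regime, verifying the monotone descent recursion for the position of each already-moved card, and establishing the tiebreaker via the no-interference argument. Once (B) is in hand, the rest reduces to standard asymptotic estimates for iid uniforms; care is needed only to handle boundary terms where $s - \ell$ leaves the range $\{2, \dots, n\}$ (these contribute negligibly to the limiting Gaussian integral) and to justify uniformity in the Taylor expansions on compact ranges of the parameter $d$.
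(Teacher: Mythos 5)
Your proposal is correct, and it takes a genuinely different route from the paper. The paper argues directly about card $j$: conditioning on the position $k$ in which card $j$ is reinserted, it computes the probability that cards $1,\dots,j-1$ pass to the right of card $j+k-1$, that cards $j+1,\dots,j+k-1$ then pass to the right of card $j$, and that cards $j+k,\dots,n$ avoid position $1$, arriving at the exact formula $p_n(\text{id},\{\sigma_1=j\})=\frac1n\big(\frac{n-1}n\big)^{n-j}+\frac{(n-1)!}{n^n}\sum_{k=2}^{n-j+1}\frac{(n-1)^{n-j-k+1}}{(n-j-k+1)!}$, whose three regimes are then handled by splitting the sum and Stirling-type estimates. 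You instead encode the shuffle by the i.i.d.\ uniform insertion positions $T_1,\dots,T_n$ and identify $\sigma_1$ pathwise: the last card inserted at position $1$ if there is one, else the largest index attaining $\min_\ell(T_\ell+\ell)$. Your key claim (B) is true and your sketched induction is the right mechanism: with $p_{\ell,m}$ the position of an already-moved card $\ell$ before step $m$, the quantity $p_{\ell,m}+m$ is constant except that it increases by $1$ exactly when card $m$ is inserted to its left, and in that case $T_m+m$ is strictly smaller than the updated value; hence the minimal-score card of largest index is never delayed, is the first to reach position $2$ (at step $T_{\ell^*}+\ell^*-1\le n$, which is where the largest-$\ell$ tiebreak comes from), and is never displaced afterwards since no later card is inserted at position $1$. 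Granting this, your conditional product formula (weak inequalities for $\ell<k$, strict for $\ell>k$) is correct and in fact reproduces the paper's exact probabilities (both give $\frac{10}{27}$ and $\frac{8}{27}$ for $n=3$, $j=1,2$), and the remaining asymptotics are elementary: the Riemann-sum Gaussian tail in (iii), the exponentially small product when $k=\Theta(n)$ in (i), and for (ii) the bound $\prod_{\ell<k}P(T_\ell\ge k+2-\ell)\le Cn^{-(1+\epsilon)}$ — which you should state as a direct estimate of the product rather than as the $d=\infty$ endpoint of (iii), since you need a rate, not just a vanishing limit. What your route buys is that the hard part becomes a clean deterministic lemma about independent uniforms, after which no delicate manipulation of factorial-weighted sums is needed; what the paper's route buys is an explicit closed formula for every finite $n$ and $j$, which is more amenable to finer questions (e.g.\ monotonicity in $j$) than your implicit argmin description.
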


 \bf\noindent Remark 1.\rm\ From Theorem \ref{First}, it follows that \it the most
 likely numbers for the first position lie ``right next to'' the least likely numbers.\rm\ More precisely,
  the following facts  follow from Theorem \ref{First}:

\noindent 1. (Most likely asymptotic numbers for first position) Let $\{\gamma_n\}_{n=1}^\infty$ denote a sequence satisfying  $1\le \gamma_n\le n$, for each $n$. Then
$$
\sup_{\{\gamma_n\}_{n=1}^\infty}\limsup_{n\to\infty}n^\frac12p_n(\text{id},\{\sigma_1=\gamma_n\})=
\frac{\sqrt{2\pi}}{2e}.
$$
In particular, the supremum is attained for sequences $\{\gamma_n\}_{n=1}^\infty$ satisfying
$\gamma_n=o(n^\frac12)$.

\noindent 2. (Least likely asymptotic numbers for first position) Let $\{\gamma_n\}_{n=1}^\infty$ denote a sequence satisfying  $1\le \gamma_n\le n$, for each $n$. Then
$$
\inf_{\{\gamma_n\}_{n=1}^\infty}\liminf_{n\to\infty}np_n(\text{id},\{\sigma_1=\gamma_n\})=e^{-1}.
$$
In particular, the infimum is attained for sequences $\{\gamma_n\}_{n=1}^\infty$ satisfying $\gamma_n=o(n)$
and $\gamma_n\ge(\sqrt2+\epsilon)n^\frac12\sqrt{\log n}$,
for some $\epsilon>0$.

\noindent \bf Remark 2.\rm\ Note that the boundary layer between
$p_n(\text{id},\{\sigma_1=j\})$ being on the order
$n^{-\frac12}$ and being on the order $n^{-1}$ is the narrow
strip   where $j$ is on a larger order than $n^\frac12$
but on an order no larger than $n^\frac12\log n$.
\medskip

The bias
with regard to the last position is  considerably tamer than the bias with regard to the first position.
\begin{theorem}\label{Last}
Under $p_n(\text{id},\cdot)$, the random variable $\sigma_n$, denoting the number of the card in the last position, has the following
behavior:

\noindent i.
\begin{equation}
\lim_{n\to\infty}np_n(\text{id},\{\sigma_n=b_nn\})=\frac{e^b}{e-1}, \
\text{if}\ \lim_{n\to\infty}b_n=b\in[0,1).
\end{equation}
In particular then, defining the  probability measure $\nu^{\text{L}}_n$ on $[0,1]$ by
$$
\nu^{\text{L}}_n(A)=p_n(\text{id},\{\sigma_n\in nA\}), A\subset [0,1],
$$
one has
$$
w-\-lim_{n\to\infty}\nu^{\text{F}}_n(dx)=\frac{e^x}{e-1}dx.
$$
\noindent ii.
\begin{equation}
\lim_{n\to\infty}np_n(\text{id},\{\sigma_n=b_nn\})=\frac e{e-1}, \ \text{if}\ \ \lim_{n\to\infty}b_n=1
\ \text{and}\ \lim_{n\to\infty}(n-b_nn)=\infty;
\end{equation}
\noindent iii.
\begin{equation}
\lim_{n\to\infty}np_n(\text{id},\{\sigma_n=n-l\})=\frac{e-e^{-l}}{e-1}, \ l=0,1,\cdots.
\end{equation}
\end{theorem}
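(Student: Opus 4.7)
My plan is to derive a one-step recursion relating $p_n(\text{id},\sigma_n=j)$ to $p_{n-1}(\text{id},\sigma_{n-1}=j)$, and then read off the three asymptotics by unfolding. Write $\pi^{(k)}_i$ for the position of card $i$ at the end of step $k$. Step $n$ is the lever: card $n$ is removed from $\pi^{(n-1)}_n$ and reinserted at an independent uniform $Y_n\in\{1,\dots,n\}$. A quick case analysis on the removal-then-insertion shows, for $j<n$,
\begin{equation*}
\pi^{(n)}_j=n\iff Y_n\le n-1\ \text{and}\ \bigl[\pi^{(n-1)}_j=n\ \text{or}\ \bigl(\pi^{(n-1)}_j=n-1,\ \pi^{(n-1)}_n=n\bigr)\bigr],
\end{equation*}
while $\pi^{(n)}_n=Y_n$ gives immediately $p_n(\text{id},\sigma_n=n)=1/n$, matching part (iii) at $l=0$. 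Independence of $Y_n$ yields
\begin{equation*}
p_n(\text{id},\sigma_n=j)=\tfrac{n-1}{n}\Bigl[\Pr(\pi^{(n-1)}_j=n)+\Pr(\pi^{(n-1)}_j=n-1,\pi^{(n-1)}_n=n)\Bigr]\qquad(j<n).
\end{equation*}

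The two probabilities I would compute separately. Tracking card $j$ after its insertion at step $j$ and using that for $k<n$ card $k$ cannot sit at position $n$ at the start of step $k$ (since cards $k,\dots,n$ are in their original relative order at that moment and $k$ is the leftmost of them), one checks the identity of events $\{\pi^{(k)}_j=n\}=\{\pi^{(k-1)}_j=n,\ Y_k\le n-1\}$. Iterating from $\Pr(\pi^{(j)}_j=n)=1/n$ gives $\Pr(\pi^{(n-1)}_j=n)=\tfrac{1}{n}\bigl(\tfrac{n-1}{n}\bigr)^{n-1-j}$. For the joint event, the crucial observation is that $\{\pi^{(n-1)}_n=n\}=\{Y_k\le n-1\text{ for all }k\le n-1\}$; conditionally on this event the $Y_k$ are i.i.d.\ uniform on $\{1,\dots,n-1\}$, and a short induction shows that card $n$ never leaves position $n$ during the first $n-1$ steps. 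Consequently the induced process on cards $1,\dots,n-1$ in positions $1,\dots,n-1$ is exactly the $(n-1)$-card card-cyclic shuffle, and
\begin{equation*}
\Pr(\pi^{(n-1)}_j=n-1,\pi^{(n-1)}_n=n)=\bigl(\tfrac{n-1}{n}\bigr)^{n-1}p_{n-1}(\text{id},\sigma_{n-1}=j).
\end{equation*}
Assembling, and setting $s_n(l):=np_n(\text{id},\sigma_n=n-l)$, the recursion simplifies to
\begin{equation*}
s_n(l)=\bigl(\tfrac{n-1}{n}\bigr)^{n-1}s_{n-1}(l-1)+\bigl(\tfrac{n-1}{n}\bigr)^{l},\qquad s_n(0)=1.
\end{equation*}

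All three asymptotics fall out from this. For part (iii), with $l$ fixed the coefficients converge to $e^{-1}$ and $1$, giving $s_\infty(l)=e^{-1}s_\infty(l-1)+1$ with $s_\infty(0)=1$, whose solution is $s_\infty(l)=\sum_{i=0}^l e^{-i}=\tfrac{e-e^{-l}}{e-1}$. For parts (i) and (ii), with $l=l_n\to\infty$, I would unfold to
\begin{equation*}
s_n(l_n)=\prod_{i=0}^{l_n-1}a_{n-i}+\sum_{i=0}^{l_n-1}\Bigl(\prod_{j=0}^{i-1}a_{n-j}\Bigr)\bigl(\tfrac{n-1-i}{n-i}\bigr)^{l_n-i},\quad a_m:=\bigl(\tfrac{m-1}{m}\bigr)^{m-1},
\end{equation*}
and observe that the homogeneous product is $O(e^{-l_n})$ and hence negligible. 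In the sum only $i=O(1)$ contributes because $\prod_{j<i}a_{n-j}\sim e^{-i}$; for such $i$, the remaining factor $\bigl((n-1-i)/(n-i)\bigr)^{l_n-i}\to e^{-(1-b)}$ under (i) and $\to 1$ under (ii), so the limit of the geometric sum is $\tfrac{e^b}{e-1}$ and $\tfrac{e}{e-1}$ respectively.

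The main obstacle is the conditional-reduction argument that identifies the joint event with a smaller card-cyclic shuffle: one must simultaneously justify that conditioning on $\{Y_k\le n-1\ \forall k<n\}$ both makes each $Y_k$ uniform on $\{1,\dots,n-1\}$ and pins card $n$ at position $n$ throughout steps $1,\dots,n-1$. Once that invariance is in place, the step-$n$ decomposition and the asymptotic manipulation of the recursion are routine.
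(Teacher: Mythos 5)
Your argument is correct, and it is genuinely different from the paper's. The paper conditions on the position in which card $j$ itself is reinserted and tracks what every subsequent card must do, which yields the closed-form identity $p_n(\text{id},\{\sigma_n=j\})=\frac{(n-1)!}{n^n}\sum_{k=j}^n\frac{k^{j-1}(k-1)^{k-j}}{(k-1)!}$; the three limits are then extracted from this sum via Stirling's formula and a summation-by-parts lemma giving $\sum_{m=1}^{n-1}\frac{m^m}{m!}\sim\frac{e^n}{(e-1)\sqrt{2\pi n}}$. You instead exploit a self-similarity of the shuffle: a last-step decomposition plus the observation that $\{\pi^{(n-1)}_n=n\}=\{Y_k\le n-1\ \forall k\le n-1\}$, on which event the first $n-1$ cards perform exactly the $(n-1)$-card shuffle, giving the recursion $s_n(l)=\bigl(\tfrac{n-1}{n}\bigr)^{n-1}s_{n-1}(l-1)+\bigl(\tfrac{n-1}{n}\bigr)^{l}$ for $s_n(l)=np_n(\text{id},\{\sigma_n=n-l\})$. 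I checked this against the paper's exact formula for small $n$ (e.g.\ $n=3$: $s_3(1)=\tfrac{10}{9}$, $s_3(2)=\tfrac89$) and it agrees, and the unfolding does deliver all three regimes as you describe. What your route buys is that it bypasses the explicit sum and the $\sum m^m/m!$ asymptotics entirely, replacing hard analysis by a probabilistic reduction plus a dominated-convergence passage in the unfolded recursion; what the paper's route buys is the exact formula itself, which is reused elsewhere (e.g.\ for the monotonicity statements about $p_n(\text{id},\{\sigma_n=j\})$). Two points you should write out when finalizing: (a) the identity $\{\pi^{(n-1)}_n=n\}=\{Y_k\le n-1\ \forall k\le n-1\}$ needs the remark that at every step $k\le n-1$ the removed card $k$ lies to the \emph{left} of card $n$ (cards $k,\dots,n$ are still in original relative order), so card $n$'s position is non-increasing and can never return to $n$ once lost; and (b) for the interchange of limit and sum in parts (i)--(ii), the uniform domination $\prod_{j=0}^{i-1}a_{n-j}\le 2^{-i}$ (valid since $a_m\le\frac12$ for $m\ge2$) is the cleanest bound — it also disposes of the homogeneous term in the $b=0$ case, where the factors with small index are not uniformly $e^{-1}(1+O(1/n))$ and the bare claim ``$O(e^{-l_n})$'' is slightly too strong, though the conclusion that it vanishes is unaffected.
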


\noindent \bf Remark.\rm\
The following facts  follow from Theorem \ref{Last}:

\noindent 1. (Most likely asymptotic numbers for last position)
Let $\{\gamma_n\}_{n=1}^\infty$ denote a sequence satisfying  $1\le \gamma_n\le n$, for each $n$.
Then
$$
\sup_{\{\gamma_n\}_{n=1}^\infty}\limsup_{n\to\infty}np_n(\text{id},\{\sigma_n=\gamma_n\})=\frac e{e-1}.
$$
In particular, the supremum is attained for sequences $\{\gamma_n\}_{n=1}^\infty$
satisfying $\lim_{n\to\infty}\frac{\gamma_n}n=1$ and $\lim_{n\to\infty}(n-\gamma_n)=\infty$.

\noindent 2. (Least likely asymptotic numbers for last position)
Let $\{\gamma_n\}_{n=1}^\infty$ denote a sequence satisfying  $1\le \gamma_n\le n$, for each $n$.
Then
$$
\inf_{\{\gamma_n\}_{n=1}^\infty}\liminf_{n\to\infty}np_n(\text{id},\{\sigma_n=\gamma_n\})=\frac 1{e-1}.
$$
In particular, the supremum is attained for sequences $\{\gamma_n\}_{n=1}^\infty$
satisfying $\gamma_n=o(n)$.

\medskip

Theorem \ref{First} showed that the cards with numbers on the order $n^\frac12$ are more likely to occupy the first
position than cards with larger numbers. In fact, more generally, cards with numbers on the order $n^\frac12$ are more
likely to occupy any position at the beginning of the deck  than are cards with larger numbers.
We can quantify this and use it to prove that the total variation norm between the card-cyclic to random insertion shuffle measure
 and the uniform measure
converges to 1 as $n\to\infty$.
Recall that the total variation norm between two probability measures $\mu$ and $\nu$ on $S_n$ is defined
by
$$
||\mu-\nu||_{\text{TV}}=\sup_{A\subset S_n}(\mu(A)-\nu(A))=\frac12\sum_{\sigma\in S_n}|\mu(\sigma)-\nu(\sigma)|.
$$
\begin{theorem}\label{TV}
Let
\begin{equation*}
A^{(n)}_{M,L}=\{\sigma\in S_n:\sigma_j\le Mn^\frac12,\ \text{for some}\ j\le L\}
\end{equation*}
be the event that a card with  a number less than or equal to $Mn^\frac12$ appears in one of the first $L$ positions.
Then for sufficiently large $C$,
\begin{equation}\label{TVcond}
\lim_{M\to\infty}\lim_{n\to\infty}p_n(\text{id},A^{(n)}_{M,CM^2})=1.
\end{equation}
In particular then,
\begin{equation}\label{TV0}
\lim_{n\to\infty}||p_n(\text{id},\cdot)-U_n||_{\text{TV}}=1.
\end{equation}
\end{theorem}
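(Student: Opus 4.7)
The plan is to deduce (\ref{TV0}) from (\ref{TVcond}) and then prove (\ref{TVcond}) by a Chebyshev-type second-moment argument.

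\textbf{Reduction of (\ref{TV0}) to (\ref{TVcond}).} Under the uniform measure $U_n$ on $S_n$, a union bound over the first $CM^2$ positions gives
$$U_n(A^{(n)}_{M,CM^2})\le CM^2\cdot\frac{Mn^{1/2}}{n}=\frac{CM^3}{n^{1/2}}\xrightarrow[n\to\infty]{}0$$
for any fixed $M$ and $C$. Combining with the elementary inequality $\|p_n(\text{id},\cdot)-U_n\|_{\text{TV}}\ge p_n(\text{id},A^{(n)}_{M,CM^2})-U_n(A^{(n)}_{M,CM^2})$ and (\ref{TVcond}), sending $n\to\infty$ first and then $M\to\infty$ yields (\ref{TV0}).

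\textbf{Second moment for (\ref{TVcond}).} Set $L=CM^2$ and
$$N_n:=\sum_{j=1}^{L}\mathbf{1}_{\{\sigma_j\le Mn^{1/2}\}},$$
so that $A^{(n)}_{M,L}=\{N_n\ge 1\}$ and Chebyshev gives $P(N_n=0)\le\mathrm{Var}(N_n)/(EN_n)^2$. It suffices to show that, for $C$ sufficiently large, the right-hand side vanishes as $n\to\infty$ followed by $M\to\infty$. The first-moment step is to extend Theorem \ref{First}(iii) from $j=1$ to all $j\in[1,L]$, aiming for an absolute constant $c>0$ with $\liminf_{n\to\infty}P(\sigma_j\le Mn^{1/2})\ge c$ for all $M\ge 1$ and all $1\le j\le L$; summing gives $\liminf_n EN_n\ge cL=cCM^2$. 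The second-moment step is to control the pairwise covariance $\mathrm{Cov}(\mathbf{1}_{\{\sigma_i\le Mn^{1/2}\}},\mathbf{1}_{\{\sigma_j\le Mn^{1/2}\}})$ uniformly in $i\ne j\le L$, showing that it tends to $0$ as $n\to\infty$ for each fixed $M$. Combined with the trivial diagonal bound $\mathrm{Var}(\mathbf{1}_{\{\sigma_j\le Mn^{1/2}\}})\le 1/4$, this yields $\limsup_n\mathrm{Var}(N_n)\le L/4$, and hence $\mathrm{Var}(N_n)/(EN_n)^2=O(1/L)=O(1/(CM^2))\to 0$ as $M\to\infty$.

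\textbf{Main obstacle.} The principal difficulty is extending the analysis behind Theorem \ref{First} from position $1$ to general early positions $j$, and especially controlling the pairwise joint probabilities $P(\sigma_i\le Mn^{1/2},\sigma_j\le Mn^{1/2})$ needed for the covariance bound. Theorem \ref{First} exploits a clean description of $\sigma_1$ in terms of the independent insertion positions $V_1,\dots,V_n$; for $j>1$ one must instead track the position of a small-numbered card through the cyclic reinsertion process as a random walk whose per-step variance, near the front of the deck, is of order $n^{-1/2}$. The covariance estimate requires a joint random-walk analysis of two such cards, with careful control of their interaction while both remain near the front of the deck.
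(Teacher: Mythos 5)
Your reduction of \eqref{TV0} to \eqref{TVcond} is fine and is essentially what the paper does (the paper computes $U_n(A^{(n)}_{M,L})$ exactly, but the union bound serves the same purpose). The problem is \eqref{TVcond} itself: your second-moment scheme rests on two inputs that you explicitly leave unproven --- a lower bound $\liminf_n p_n(\text{id},\{\sigma_j\le Mn^{1/2}\})\ge c$ with $c$ independent of $M$ and of $j\le CM^2$, and the vanishing of the pairwise covariances as $n\to\infty$ for fixed $M$ --- and these are precisely the substance of the theorem, not a technical afterthought. Worse, the covariance claim is almost certainly false as stated. For fixed $M$ the randomness governing the front of the deck does not wash out as $n\to\infty$: for example, with probability tending to $(1-Mn^{-1/2})^{[Mn^{1/2}]}\to e^{-M^2}>0$, no card numbered at most $Mn^{1/2}$ is ever reinserted into the first $[Mn^{1/2}]$ positions, and conditioned on this event the chance that any given early position ends up holding a small-numbered card is drastically reduced (such a card would have to drift in from position $\ge Mn^{1/2}$, an event of probability exponentially small in $M$ by the Theorem \ref{First}-type estimates). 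This common source of randomness forces $\lim_n\mathrm{Cov}\bigl(\mathbf{1}_{\{\sigma_i\le Mn^{1/2}\}},\mathbf{1}_{\{\sigma_j\le Mn^{1/2}\}}\bigr)$ to be nonzero in general for fixed $M$; at best the limiting covariances are small as $M\to\infty$, and to exploit that you would need quantitative joint control of two early positions --- exactly the ``joint random-walk analysis'' you defer. So as written the argument has a genuine gap at its core.

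The paper's proof avoids both difficulties by never asking for per-position probabilities or joint laws: it only needs \emph{one} small card to finish near the front. It defines $j^{(n)}_M$ to be the lowest-numbered card that is reinserted into a position $\le[Mn^{1/2}]$ (this exists with probability $\to 1-e^{-M^2}$), and then, by elementary binomial estimates and a geometric-variable/Chebyshev bound (the events $B^{(n)}_M$, $C^{(n)}_{M,L}$, $D^{(n)}_{M,L}$, $E^{(n)}_{M,L}$ with $L\approx 3M^2$), shows that with conditional probability tending to $1$ (first $n\to\infty$, then $M\to\infty$) at most about $(2+\rho)L$ cards end up to the left of card $j^{(n)}_M$, so it finishes within the first $CM^2$ positions. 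Since $1-e^{-M^2}\to1$, \eqref{TVcond} follows with no extension of Theorem \ref{First} to other positions and no covariance estimates. If you want to salvage your approach, you would have to replace ``covariance $\to0$ for fixed $M$'' by a bound that is quantitatively small in $M$ and prove the uniform first-moment bound over $j\le CM^2$; both appear to require at least as much work as the paper's single-card tracking argument.
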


\medskip

The first two theorems dealt with the distribution of the {\it number of the  card} in special positions---namely, the first and the last positions.
We now consider the distribution of the {\it position of the   card} with a general  number.
\begin{theorem}\label{general}
Under $p_n(\text{id},\cdot)$, the random variable $\sigma^{-1}_{b_nn}$, denoting the position of card number
$b_nn$,     has the following behavior.
Assume that $\lim_{n\to\infty}b_n=b\in[0,1]$.
Then the weak limit of the distribution of $\frac1n\sigma^{-1}_{b_nn}$ exists. Its distribution function
$$
F_b(x)\equiv\lim_{n\to\infty}p_n(\text{id},\{\sigma^{-1}_{b_nn}\le xn\}), \ x\in[0,1],
$$
is given as follows. Define
$G_b:[0,1]\to[0,1]$ by
$$
G_b(y)=\begin{cases}
 ye^{1-b},\ & 0\le y\le 1-(1-b)e^b;\\
 e^{(1-y)e^{-b}}-(1-y)e^{1-b}, \ &  1-(1-b)e^b\le y\le 1.
\end{cases}
$$
Then $F_b=G_b^{-1}$.
\end{theorem}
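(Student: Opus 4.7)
The plan is to identify a deterministic ODE governing the rescaled position of card number $\lfloor b_n n\rfloor$ and to solve it explicitly.

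The first step is to determine the scaling limit of the position $Y_j$ of card $j$ just before step $j$. Since cards $j,j+1,\ldots,n$ preserve their original relative order throughout steps $1,\ldots,j-1$, tracking the effect of each remove-and-reinsert operation on the position $P_m^{(j)}$ of a still-unmoved card $m$ (i.e.\ for $j<m$) yields the clean recursion
$$
P_m^{(j)}=P_m^{(j-1)}-\mathbf{1}\{U_j\ge P_m^{(j-1)}\},\qquad P_m^{(0)}=m,
$$
where $U_j$ is the uniform reinsertion position at step $j$. Hence $n-P_m^{(j)}$ is a (discrete-time) pure birth chain with state-dependent rate $(n-P_m^{(j-1)}+1)/n$, and a routine martingale or Chernoff argument shows that, uniformly for $0\le s\le t\le 1$,
$$
P_{\lfloor tn\rfloor}^{(\lfloor sn\rfloor)}/n\;\longrightarrow\;1-(1-t)e^{s}\qquad\text{in probability.}
$$
Specializing to $s=t$ gives $Y_{\lfloor tn\rfloor}/n\to m(t):=1-(1-t)e^{t}$, which at $t=b$ equals exactly the dividing point $1-(1-b)e^{b}$ appearing in the definition of $G_b$.

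Next, set $k=\lfloor b_n n\rfloor$; just after step $k$ the card lies at $P_k^{(k)}=U_k$. For $j>k$ a direct case analysis of the remove-and-reinsert of card $j$ (then located at $Y_j$) gives
$$
P_k^{(j)}-P_k^{(j-1)}=-\mathbf{1}\{Y_j<P_k^{(j-1)}\}+\mathbf{1}\bigl\{U_j\le P_k^{(j-1)}-\mathbf{1}\{Y_j<P_k^{(j-1)}\}\bigr\},
$$
so that the conditional expected shift given $P_k^{(j-1)}=p$ equals $p/n-1$ when $Y_j<p$ and $p/n$ when $Y_j>p$. Combined with the convergence in the previous step, this drives the rescaled process $x_s:=P_k^{(\lfloor sn\rfloor)}/n$ to the ODE
$$
\dot x(s)=(x(s)-1)\,\mathbf{1}\{x(s)>m(s)\}+x(s)\,\mathbf{1}\{x(s)<m(s)\},\qquad s\in[b,1],
$$
with initial condition $x(b)=U_k/n$, which is asymptotically uniform on $[0,1]$.

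I then solve this ODE in the two possible initial regimes. If $x(b)\le m(b)$, one checks, using that the critical point of $m-x$ falls outside $[b,1]$, that $x(s)<m(s)$ for every $s\in[b,1]$, so $\dot x=x$ and $x(1)=x(b)e^{1-b}$. If $x(b)>m(b)$, then $\dot x=x-1$ drives $x$ downward, meeting the curve $m$ transversally at the unique time $t^{*}=1-(1-x(b))e^{-b}$; beyond $t^{*}$ one has $\dot x=x$, yielding $x(1)=e^{(1-x(b))e^{-b}}-(1-x(b))e^{1-b}$. Matching with the piecewise definition shows $x(1)=G_b(x(b))$ in both regimes. Since $x(b)$ is asymptotically uniform on $[0,1]$ and $G_b$ is continuous and nondecreasing, this yields $n^{-1}\sigma^{-1}_{\lfloor b_n n\rfloor}\Rightarrow G_b(U)$ with $U$ uniform, whose distribution function is $G_b^{-1}=F_b$. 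The principal technical obstacle is justifying the limiting ODE rigorously despite the discontinuous drift across $\{x=m(s)\}$; the explicit solution above shows that every limiting trajectory crosses this manifold at most once and transversally, so quantitative concentration bounds for $Y_j$ and for the prelimit process $P_k^{(j)}$ (via a Doob-type argument exploiting the independence of the $U_j$'s) suffice to rule out additional or persistent crossings.
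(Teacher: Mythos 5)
Your argument is correct, and it reaches $G_b$ by a genuinely different route than the paper. The paper conditions on the (exactly uniform) reinsertion position $d_nn$ of card $b_nn$ and then decomposes its final position as $M+R+S+1$, where $M$ counts the already-moved (lower-numbered) cards to its left right after reinsertion, $R$ counts the not-yet-moved cards initially to its left that end up to its left, and $S$ counts the not-yet-moved cards to its right that end up to its left; each count is written in terms of sums of independent geometric random variables and sent to its deterministic limit by the law of large numbers, giving the conditional limit $\delta_{G_b(d)}$ in one shot. You instead run a fluid-limit/stochastic-approximation argument: the positions of still-unmoved cards concentrate on $1-(1-t)e^{s}$ (whose diagonal value $m(b)=1-(1-b)e^{b}$ is exactly the paper's threshold, there arising from the analysis of $M$), and the tracked card's rescaled position solves the piecewise ODE $\dot x=x-1$ above the curve $m$ and $\dot x=x$ below it, started from the uniform reinsertion point; your drift computation, the crossing time $t^{*}=1-(1-x(b))e^{-b}$, and the resulting endpoint formulas all check out and reproduce both branches of $G_b$, hence $F_b=G_b^{-1}$. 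What the paper's route buys is a short, static computation with no discontinuous dynamics to control; what yours buys is a dynamic picture of the whole trajectory (making the two-regime shape of $G_b$ transparent, and naturally suggesting the asymptotic independence of Corollary \ref{joint}), at the cost of having to justify the ODE limit across the switching curve. You correctly identify that crux and the reason it is harmless (the drift is transversal to $m$, so any limiting trajectory crosses at most once, and since $m(s)<se^{s}$ for $s>0$ a trajectory below the curve stays below); filling in the uniform concentration estimates and the $O(\epsilon)$ window around $t^{*}$ is routine and comparable in level of detail to the paper's own use of the law of large numbers for triangular arrays of geometric variables.
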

A calculus exercise gives the following corollary of the theorem.
\begin{corollary}\label{density}
Let $f_b$ denote the density of  the distribution function $F_b$, that is, the density function for the limiting
rescaled position of a  card with a number around $bn$. Let
$x_b=e^{1-b}-(1-b)e$.
Then $f_1\equiv1$, and for $b\in[0,1)$,

\noindent i. $f_b(x)=e^{b-1}, \ 0\le x<x_b$;

\noindent ii. $f_b(x_b^+)=\frac{e^{b-1}}{1-e^{-b}}$;

\noindent iii. $f_b(x)$ is decreasing and convex for $x\in(x_b,1]$;

\noindent iv. $f_b(1)=\frac{e^{b-1}}{1-e^{-1}}$.
\end{corollary}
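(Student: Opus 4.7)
The corollary is flagged as a ``calculus exercise,'' and my plan is to differentiate the explicit piecewise formula for $G_b$ and invoke the inverse function theorem, which gives $f_b(x)=1/G_b'(F_b(x))$ wherever $G_b'>0$. The key preliminary simplification is to write the second branch using the substitution $u=(1-y)e^{-b}$, which collapses $G_b$ to the clean expression $G_b(y)=e^u-ue$, yielding
\[
G_b'(y)=e^{-b}(e-e^u),\quad G_b''(y)=e^{-2b}e^u,\quad G_b'''(y)=-e^{-3b}e^u.
\]
On the linear branch one has $G_b'(y)\equiv e^{1-b}$. Continuity at the breakpoint $y=1-(1-b)e^b$ (where both branches evaluate to $x_b$) and the bound $u\in[0,1-b]$ on the second branch give $e^u\le e^{1-b}<e$ for $b>0$, so $G_b'>0$ on $(0,1]$ and the inversion is legitimate.

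With $f_b=1/G_b'\circ F_b$, items (i), (ii), (iv), and the degenerate case $b=1$ are immediate evaluations. For (i), $F_b$ sends $[0,x_b]$ into the linear branch, so $f_b\equiv e^{b-1}$ there. For (ii), plugging $y=1-(1-b)e^b$, so $u=1-b$, gives $G_b'=e^{1-b}(1-e^{-b})$ and $f_b(x_b^+)=e^{b-1}/(1-e^{-b})$. For (iv), plugging $y=1$ (so $u=0$) gives $G_b'=e^{-b}(e-1)$ and $f_b(1)=e^b/(e-1)=e^{b-1}/(1-e^{-1})$. When $b=1$ the linear branch is all of $[0,1]$, $G_1$ is the identity, and $f_1\equiv 1$.

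The substantive content is (iii). Monotonicity is immediate from the formula: increasing $y$ decreases $u$, hence decreases $e^u$, hence increases $G_b'$, hence decreases $f_b$. For convexity, differentiating $G_b(F_b(x))=x$ twice gives the standard identity
\[
f_b''(x)=\frac{3[G_b''(y)]^2-G_b'''(y)G_b'(y)}{[G_b'(y)]^5},\qquad y=F_b(x).
\]
Substituting the formulas above, the numerator becomes $e^{-4b}e^u(2e^u+e)$, which is strictly positive (the two terms from $3[G_b'']^2$ and $-G_b'''G_b'$ have the same sign, because $G_b''>0$ and $G_b'''<0$), and the denominator is positive, so $f_b''>0$.

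The only point where I expect any real care is the sign bookkeeping when differentiating through the substitution $u=(1-y)e^{-b}$ (each $y$-derivative introduces a factor of $-e^{-b}$); conceptually there is no obstacle, and everything reduces to checking these explicit $u$-expressions.
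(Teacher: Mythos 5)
Your computation is correct and is exactly the ``calculus exercise'' the paper intends: the paper gives no written proof, and its own later argument (in the proof of Corollary \ref{densityatpos}) uses the same identity $f_b(x)=1/G_b'(F_b(x))$ and differentiation of the second branch of $G_b$ that you employ. Your substitution $u=(1-y)e^{-b}$, the endpoint evaluations giving (i), (ii), (iv) and $f_1\equiv 1$, and the third-derivative identity yielding the numerator $e^{-4b}e^u(2e^u+e)>0$ for (iii) all check out, so there is nothing to add.
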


\noindent \bf Remark.\rm\ The corollary shows that for a card with a number around $bn$, with $b\in(0,1)$, the most likely
positions in which it will end up are those just to the right of $nx_b=n(e^{1-b}-(1-b)e)$, and the least likely
positions are all of those less than $nx_b$. In particular,  \it the  most likely positions for a card
with a number around $bn$ lie ``right next to'' the least likely positions.\rm\ See figure \ref{Fi:f_b}.
Note also that $f_0(0)=\infty$, which means that for all $b$ and $d$, the probability of a card with a number around
$bn$ ending up in a position around $dn$ is the greatest for $b=d=0$.
(This connects up with Theorem \ref{First}.)
The probability measures corresponding to the densities $f_b$ are weakly continuous with respect to $b\in[0,1]$.
For each $b\in(0,1)$, the density $f_b$ has a discontinuity, however considered as cadlag functions,
the densities $f_b$  vary continuously in the Skorohod topology for $b\in(0,1)$. This continuity does not extend
to $b=0$, where $f_0(0)=\infty$, or to $b=1$, where $f_1\equiv1$ but
$\lim_{b\to1}\sup_{x\in[0,1]}f_b(x)=\frac e{e-1}$.
\medskip

\begin{figure}
\includegraphics[scale=.65]{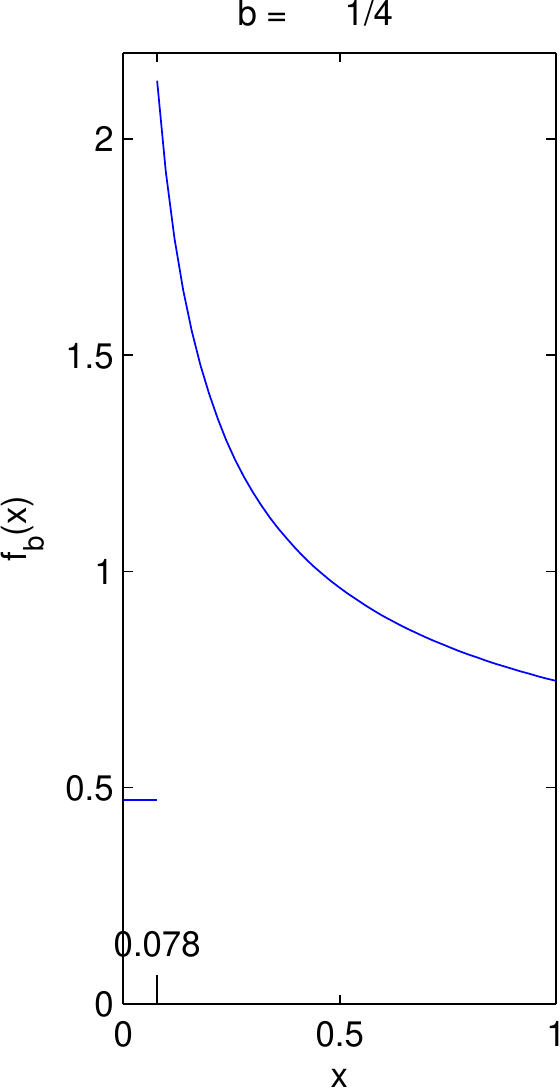}
\includegraphics[scale=.65]{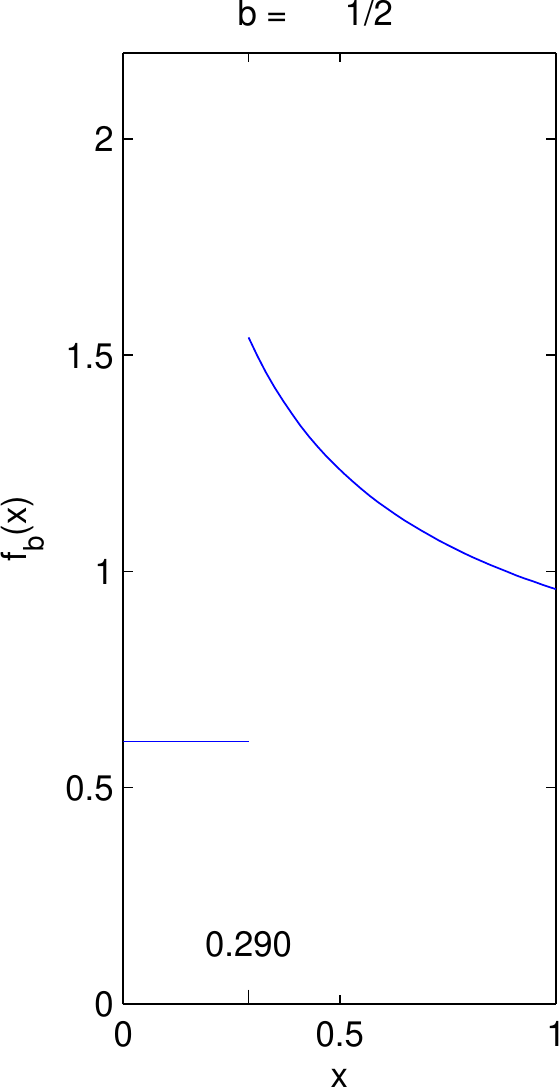}
\includegraphics[scale=.65]{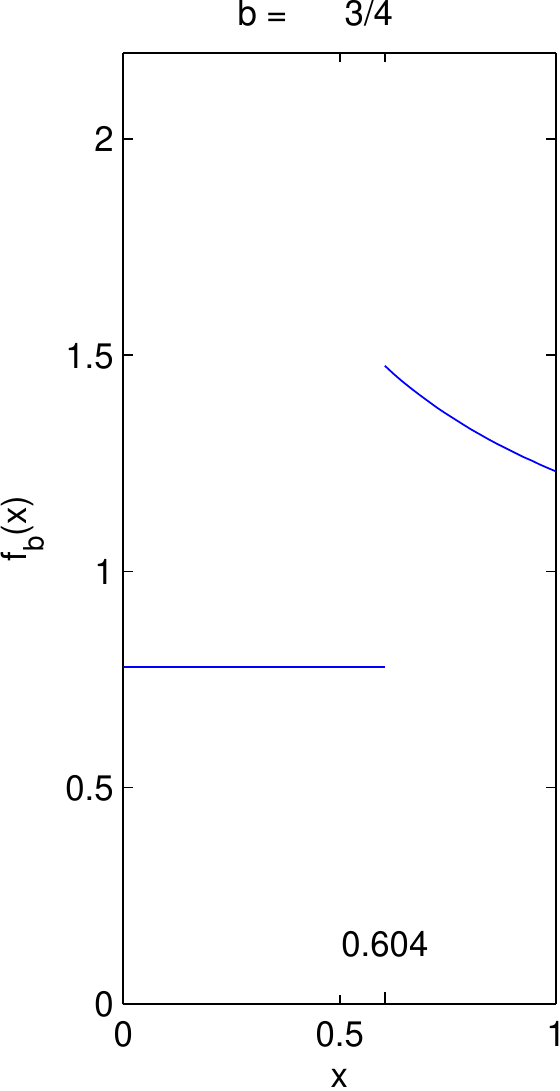}
\caption{Density for limiting rescaled position of a card with a number around $bn$.}\label{Fi:f_b}
\end{figure}

Let $E^{p_n(\text{id},\cdot)}$ denote the expectation corresponding to the card-cyclic to random insertion shuffle starting from
id, so that $E^{p_n(\text{id},\cdot)}\sigma^{-1}_{b_nn}$ is the expected position for card number $b_nn$ at the end of the shuffle.
It follows from the theorem that
if $\lim_{n\to\infty}b_n=b$, then $E(b)\equiv\lim_{n\to\infty}\frac1nE^{p_n(\text{id},\cdot)}\sigma^{-1}_{b_nn}$
exists and is given by $\int_0^1(1-F_b(x))dx$. Making a substitution and integrating by parts shows that
this integral is equal to $\int_0^1G_b(y)dy$. Computing this integral then gives the following corollary.

\begin{corollary}\label{expectation}
Let $E(b)\equiv\lim_{n\to\infty}\frac1nE^{p_n(\text{id},\cdot)}\sigma^{-1}_{b_nn}$, where $\lim_{n\to\infty}b_n=b$, denote the rescaled limiting expected position
for a card with a number around $bn$.
Then
$$
E(b)=eb+\frac12e^{1-b}-e^b.
$$
The function $E(b)$ has the following properties:

\noindent i. $E(0)=\frac12e-1\approx .359$;

\noindent ii. $E(1)=\frac12$;

\noindent iii. $E(\cdot)$ increases for $b\in[0,b^*]$ and decreases for $b\in[b^*,1]$, where
$b^*\approx .722$ is the solution to $e-e^b-\frac12e^{1-b}=0$.
The maximum value of $E(\cdot)$ is \newline $E(b^*)\approx .564$;

\noindent iv. $E(b)\ge b$, for $b\in[0,\bar b]$ and $E(b)\le b$ for $b\in[\bar b,1]$, where
$\bar b\approx .545$.

\noindent v. $\int_0^1E(b)db=\frac12$.

\end{corollary}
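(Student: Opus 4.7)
The plan is to start from the identity $E(b) = \int_0^1 G_b(y)\,dy$ given in the paragraph preceding the corollary, split the integral at the breakpoint $y_\star := 1 - (1-b)e^b$ appearing in the definition of $G_b$, and evaluate each piece in closed form. On $[0,y_\star]$ the integrand is linear in $y$, contributing $\tfrac12 e^{1-b} y_\star^2$. On $[y_\star,1]$ the substitution $u = 1-y$ turns the upper endpoint into $1 - y_\star = (1-b)e^b$ and exploits the clean simplification $(1-y)e^{-b}\mapsto u e^{-b}$, reducing the second piece to
\begin{equation*}
\int_0^{(1-b)e^b}\bigl(e^{u e^{-b}} - u\,e^{1-b}\bigr)\,du \;=\; e^b\bigl(e^{1-b}-1\bigr)-\tfrac{1}{2}(1-b)^2 e^{1+b}.
\end{equation*}
Expanding $y_\star^2$ in the first piece produces a matching $+\tfrac12(1-b)^2 e^{1+b}$ that cancels against the term above, and what remains collapses to $\tfrac12 e^{1-b} - (1-b)e + e - e^b = eb + \tfrac12 e^{1-b} - e^b$, as claimed. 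The only hazard is bookkeeping the signs through the cancellation; there is no conceptual obstacle.

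Properties (i), (ii), and (v) are now direct consequences of the closed form. Plugging $b=0$ and $b=1$ into $E(b)$ gives $\tfrac{e}{2}-1$ and $\tfrac12$ respectively, and termwise antidifferentiation yields $\int_0^1 E(b)\,db = \tfrac{e}{2} + \tfrac{e-1}{2} - (e-1) = \tfrac12$.

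For (iii), I would differentiate to obtain $E'(b) = e - e^b - \tfrac12 e^{1-b}$, and observe that $g(b) := e^b + \tfrac12 e^{1-b}$ satisfies $g''(b) = e^b + \tfrac12 e^{1-b} > 0$, so $g$ is strictly convex and hence $E'$ is strictly concave on $[0,1]$. Since $E'(0) = \tfrac{e}{2}-1 > 0$ and $E'(1) = -\tfrac12 < 0$, a strictly concave function that changes sign from positive to negative does so exactly once; this pins down the unique interior critical point $b^\star$, forces monotonicity of $E$ on the two subintervals, and justifies the quoted numerical values.

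For (iv), set $h(b) := E(b)-b$. Then $h'(b) = E'(b)-1$ is again strictly concave on $[0,1]$, with its maximum attained at the unique minimizer of $g$, which solves $\tfrac12 e^{1-b} = e^b$, i.e.\ $b_0 = (1-\log 2)/2$; there $g(b_0) = 2 e^{b_0} = \sqrt{2e}$. Hence $\max_{[0,1]} h' = e - 1 - \sqrt{2e} < 0$, so $h$ is strictly decreasing. Combined with $h(0) = \tfrac{e}{2}-1 > 0$ and $h(1) = -\tfrac12 < 0$, this yields a unique $\bar b\in(0,1)$ with $E(\bar b) = \bar b$. Throughout the corollary I anticipate no real obstacle; the only step warranting care is the concavity-plus-sign-change uniqueness argument behind (iii) and (iv), which replaces the naive (and false) attempt to argue monotonicity of $E'$ itself.
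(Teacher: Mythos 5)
Your proposal is correct and follows exactly the route the paper intends: it starts from the identity $E(b)=\int_0^1 G_b(y)\,dy$ stated just before the corollary, evaluates the integral by splitting at $y_\star=1-(1-b)e^b$ to obtain $E(b)=eb+\tfrac12 e^{1-b}-e^b$, and then verifies (i)--(v) by elementary calculus, which is precisely the ``computing this integral'' step the paper leaves to the reader. The concavity-plus-sign-change argument you use for (iii) and (iv) is a clean way to carry out that exercise and introduces nothing beyond what the paper's outline presupposes.
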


\bf\noindent Remark.\rm\ In particular, a card starting out very near the left end of the deck will end up on the average
around 35.9 percent of the way through  the deck, while a    card starting out anywhere else will end up on the average further  to the right
than this.
A card starting out  around 72.2 percent of the way through the deck will end up on the  average around 56.4 percent
of the way through the deck, while  a  card starting out anywhere else will  end up on the average further to the left than this.
A card in the first 54.5 percent of the deck  will end up on the average further to the right than where it started,
while  a card in the last 45.5 percent of the deck will  end up on the average further to the left than where it started. See figure \ref{Fi:E}.
But of course, as (v) indicates and as is clear from considerations of symmetry, the average ending position of the average card  must be the 50th percentile.
\medskip

\begin{figure}
\includegraphics[scale=.6]{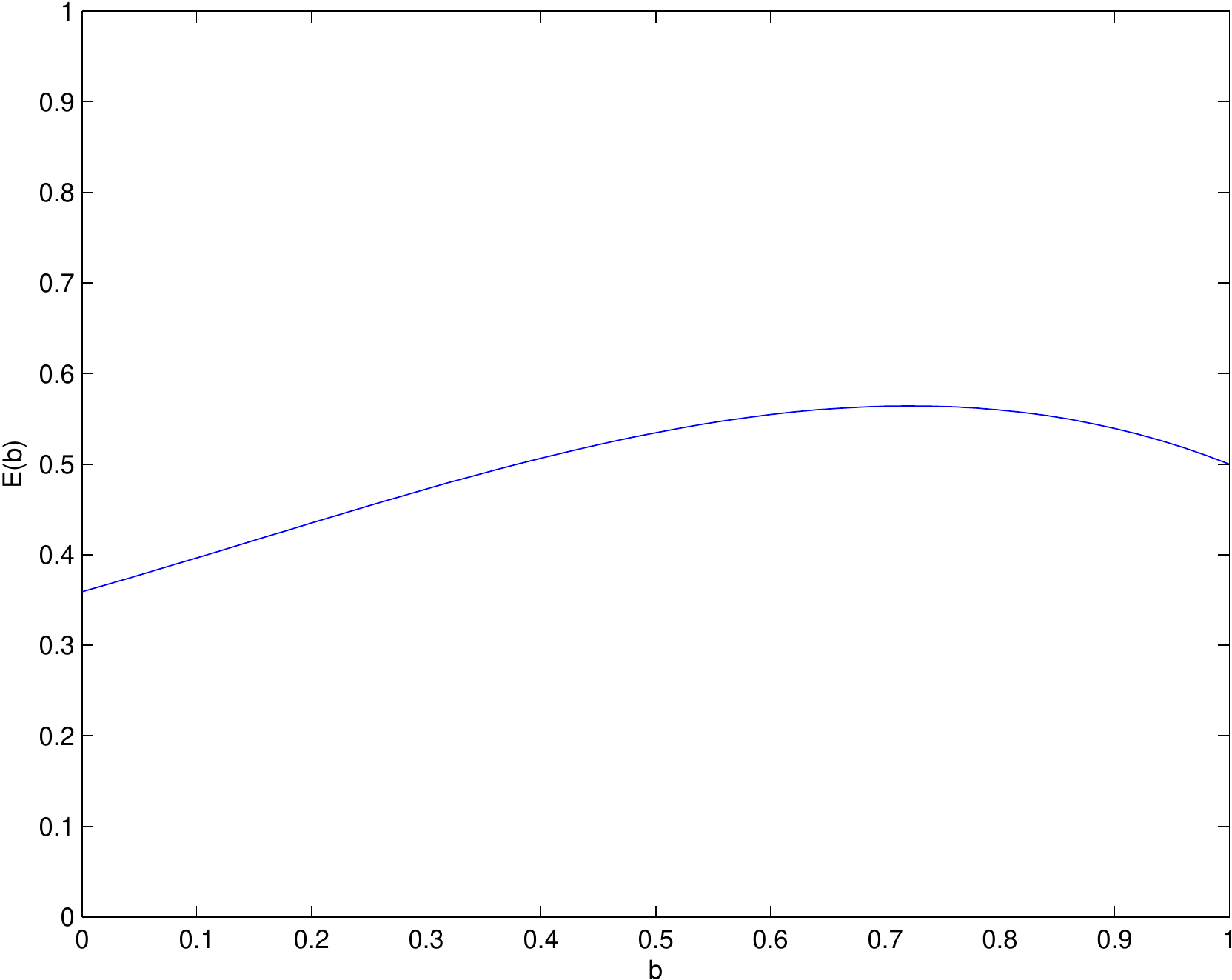}
\caption{Limiting rescaled  expected position of a card with a number around $bn$.}\label{Fi:E}
\end{figure}

The following corollary shows that the random positions
of  a finite number of cards are asymptotically independent.
The result  follows easily from the proof of Theorem \ref{general}, as will be shown after the proof of that theorem.
\begin{corollary}\label{joint}
For $m\ge 1$, let $1\le b_{1n}n<b_{2n}n\cdots< b_{mn}n\le n$ satisfy $\lim_{n\to\infty}b_{jn}=b_j\in[0,1]$;
so $0\le b_1\le b_2\le \cdots\le b_m\le 1$.
Then under $p_n(\text{id},\cdot)$, the   distribution of the random vector  $(\sigma^{-1}_{b_{1n}n},\sigma^{-1}_{b_{2n}n},\cdots,\sigma^{-1}_{b_{mn}n})$
converges to the $m$-dimensional product distribution with density $\prod_{j=1}^mf_{b_j}(x_j)$, where $x=(x_1,\cdots, x_m)$.
\end{corollary}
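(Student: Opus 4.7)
The plan is to deduce the joint convergence by a one-line application of the scheme that drives the proof of Theorem \ref{general}, together with the independence of the uniform insertion slots chosen at distinct steps of the shuffle.

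The key input I would extract from the proof of Theorem \ref{general} is a functional representation of the form
\[
\frac{\sigma^{-1}_{b_n n}}{n} \;=\; G_b\!\left(\frac{U_{b_n n}}{n}\right) \;+\; o_P(1),
\]
where $U_{b_n n}\in\{1,\ldots,n\}$ is the uniformly random slot into which card $b_n n$ is inserted at step $b_n n$, and $G_b$ is the function from Theorem \ref{general}. The content of such a representation is that the cumulative effect, on the position of card $b_n n$, of the $(1-b_n)n$ subsequent removal--reinsertion operations is, in the rescaled limit, a deterministic function of the insertion position: the variance of this cumulative shift is $O(\sqrt{n})$, so after rescaling by $n$ it concentrates around its mean. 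The only surviving source of variability is the slot $U_{b_n n}$ itself; the two regimes in the piecewise definition of $G_b$ reflect whether the deterministic trajectory stays in the interior of $[0,1]$ or runs up against the right endpoint.

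Granting this representation, I would apply it simultaneously to each of the $m$ cards to obtain
\[
\left(\frac{\sigma^{-1}_{b_{jn} n}}{n}\right)_{j=1}^m
\;=\; \left(G_{b_j}\!\left(\frac{U_{b_{jn} n}}{n}\right)\right)_{j=1}^m \;+\; o_P(1).
\]
The crucial observation is that $U_{b_{1n}n},\ldots,U_{b_{mn}n}$ are the uniform insertion choices made at $m$ distinct steps of the shuffle, and hence are mutually independent. It is true that the slot $U_{b_{jn}n}$ also appears in the post-insertion evolution of each previously-placed card $b_{in} n$ with $i < j$, but it contributes there only as a single $\pm 1$ shift at one step out of $(1-b_{in})n$, which is absorbed into the $o_P(1)$ fluid-limit error. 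Passing to the limit, the right-hand side converges jointly in distribution to $(G_{b_1}(V_1),\ldots,G_{b_m}(V_m))$ with $V_1,\ldots,V_m$ i.i.d.\ uniform on $[0,1]$; since each $G_{b_j}(V_j)$ has distribution $F_{b_j}$ with density $f_{b_j}$, the joint limit has the claimed product density.

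The essential obstacle is the functional representation displayed above, which I expect is already established en route to proving Theorem \ref{general}; nothing deeper is needed here. Beyond that, the only check is that the $m$ individual $o_P(1)$ errors combine into a single $o_P(1)$ error, which, since $m$ is fixed, is an immediate union bound. In short, the same proof that delivers Theorem \ref{general} should, with nothing more than notational bookkeeping, also yield the joint statement.
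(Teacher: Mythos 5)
Your proposal is correct and matches the paper's own argument: the paper likewise deduces the corollary from the fact, established in the proof of Theorem \ref{general}, that in the limit the only surviving randomness in a card's rescaled position is its own (uniform, hence mutually independent across cards) insertion slot, the conditional position concentrating at $G_b(d)$, while the insertions of any finite number of other cards perturb its position negligibly. Your $o_P(1)$ functional representation is just a slightly more explicit phrasing of the same reasoning.
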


We can use the above corollary to say something about the probability of inversions.
For $i<j$, if card number $j$ appears to the left of card number $i$ in a permutation $\sigma$,
then we say that the pair of cards with numbers $i$ and $j$ form  an \it inversion\rm\ for  the  permutation $\sigma$.
This concept is described more fully below, two paragraphs above Lemma \ref{counting}.
For $m=2$ in  Corollary \ref{joint}, let $(\Sigma^{-1}_{1;b_1},\Sigma^{-1}_{2;b_2})$ denote
a random vector distributed according to the density $f_{b_1}(x_1)f_{b_2}(x_2)$.
We will prove the following result.
\begin{corollary}\label{pairs}

\noindent i. Let $\hat  b\approx.768$ be the root of the equation
$(1-b)e^b-\frac12=0$. Then
$$
\begin{aligned}
&P(\Sigma^{-1}_{1;b_1}<\Sigma^{-1}_{2;b_2})>\frac12,\ \text{for}\ b_1<b_2<\hat b\ \text{and}\ b_2\ \text{sufficiently close to}\ b_1;\\
&P(\Sigma^{-1}_{1;b_1}<\Sigma^{-1}_{2;b_2})<\frac12,\ \text{for}\ \hat b<b_1<b_2\ \text{and}\ b_2\ \text{sufficiently close to}\ b_1;
\end{aligned}
$$
\noindent ii. Let $\tilde b\approx.380$ be the unique root of the equation $E(b)=\frac12$, for $b\in[0,1)$,  where $E(b)$ is as in Corollary \ref{expectation}.
Then
$$
\begin{aligned}
&P(\Sigma^{-1}_{1;b}<\Sigma^{-1}_{2;1})>\frac12,\ \text{for}\ b\in[0,\tilde b);\\
&P(\Sigma^{-1}_{1;b}<\Sigma^{-1}_{2;1})<\frac12,\ \text{for}\ b\in(\tilde b,1).
\end{aligned}
$$
\end{corollary}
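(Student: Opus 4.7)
Both parts rest on the identity
\[
P(\Sigma^{-1}_{1;b_1} < \Sigma^{-1}_{2;b_2}) = \int_0^1 F_{b_1}(y)\,f_{b_2}(y)\,dy,
\]
which uses the independence in Corollary \ref{joint} and the continuous distributions of Theorem \ref{general} and Corollary \ref{density}. For Part (ii), $f_1 \equiv 1$ makes $\Sigma^{-1}_{2;1}$ uniform on $[0,1]$, so
\[
P(\Sigma^{-1}_{1;b} < \Sigma^{-1}_{2;1}) = \int_0^1 f_b(x)(1-x)\,dx = 1 - E(b).
\]
From Corollary \ref{expectation}, $E(0) = \tfrac{e}{2}-1 < \tfrac12$, $E$ rises to its maximum $E(b^*) > \tfrac12$ at $b^* \approx .722$, then decreases back to $E(1) = \tfrac12$; continuity then forces $E(b) = \tfrac12$ at exactly one interior point $\tilde b \in (0, b^*)$, with $E < \tfrac12$ on $[0,\tilde b)$ and $E > \tfrac12$ on $(\tilde b, 1)$, giving Part (ii).

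For Part (i), set $g(b_1, b_2) = P(\Sigma^{-1}_{1;b_1} < \Sigma^{-1}_{2;b_2})$. Since $g(b,b) = \tfrac12$ by continuity of the distributions, for $b_2$ close to $b_1$ the sign of $g(b_1, b_2) - \tfrac12$ is governed by $\frac{\partial g}{\partial b_2}(b,b)$ at $b=b_1$. I would first rewrite $g(b_1, b_2) = 1 - \int_0^1 F_{b_2}(y) f_{b_1}(y)\,dy$ (integration by parts in $y$) to avoid differentiating the discontinuous density $f_{b_2}$ in $b_2$, and obtain
\[
\frac{\partial g}{\partial b_2}(b,b) = -\int_0^1 \frac{\partial F_b}{\partial b}(y)\,f_b(y)\,dy.
\]
Differentiating the identity $F_b(G_b(y)) = y$ in $b$ gives $\frac{\partial F_b}{\partial b}(x) = -f_b(x)\,\frac{\partial G_b}{\partial b}(F_b(x))$; substituting and changing variable $u = F_b(y)$ (so $f_b(G_b(u)) = 1/\partial_u G_b(u)$) reduces the derivative to
\[
\frac{\partial g}{\partial b_2}(b,b) = \int_0^1 \frac{\partial_b G_b(u)}{\partial_u G_b(u)}\,du.
\]

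I would then plug in the two pieces of $G_b$ from Theorem \ref{general}. With $y_b = 1 - (1-b)e^b$, the linear piece on $[0, y_b]$ gives integrand $-u$; on $[y_b, 1]$ a brief calculation yields $\partial_b G_b(u) = (1-u)\,\partial_u G_b(u)$, so the integrand is $1 - u$. Summing,
\[
\frac{\partial g}{\partial b_2}(b,b) = -\frac{y_b^2}{2} + \frac{(1-y_b)^2}{2} = (1-b)e^b - \tfrac12.
\]
Since $(1-b)e^b$ is strictly decreasing on $[0,1]$ with unique root $\hat b$ of $(1-b)e^b = \tfrac12$, this derivative is positive on $[0,\hat b)$ and negative on $(\hat b, 1]$, yielding Part (i). The main delicate point is justifying the differentiation under the integral and the change of variable despite the jump of $f_b$ at $x_b$ (equivalently the kink of $G_b$ at $y_b$); the remedy is to rewrite $g$ so that only $F_{b_2}$, which is continuous in $b_2$, is ever differentiated, and then to split the final integral at $y_b$ onto the two smooth pieces of $G_b$.
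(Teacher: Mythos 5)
Your proposal is correct and follows essentially the same route as the paper: part (ii) via $P(\Sigma^{-1}_{1;b}<\Sigma^{-1}_{2;1})=1-E(b)$ and Corollary \ref{expectation}, and part (i) by differentiating $\int_0^1 f_{b_1}(x)\bigl(1-F_{b_2}(x)\bigr)dx$ in $b_2$ at $b_2=b_1$, using $G_b(F_b(x))=x$ and the substitution $x=G_{b_1}(y)$ to reduce the derivative to $\int_0^1 \frac{\partial_b G_b}{\partial_y G_b}\,dy$, whose piecewise values $-y$ and $1-y$ give $(1-b_1)e^{b_1}-\tfrac12$. The paper's proof is the same computation, so no substantive differences to report.
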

\bf \noindent Remark.\rm\
The first part of the  corollary indicates   that for large $n$,  if one takes a card
with a number around $b_1n$ and a card with a number around $b_2n$, with $b_2>b_1$ and sufficiently close to $b_1$,
then under $p_n(\text{id},\cdot)$, the probability that these cards form an inversion
is less than $\frac12$  if $b_1<\hat b\approx .768$ and greater than $\frac12$ if $b_1>\hat b\approx.768$.
 (We suspect that the restriction that $b_2$ be close to $b_1$ is unnecessary for the above result.)
 % all $b_2>b_1>\hat b$, the probability
%of card number $b_1n$ appearing to the right of card number $b_2n$
%is greater than $\frac12$, and that for all $b_1<b_2<\hat b$, the probability of this
%is less than $\frac12$.)
The second part of the corollary indicates that for large $n$,
if one takes a card with a number around $bn$, $b\in(0,1)$,  and a card with a number around $n$ (that is, a card from the very end of the deck),  then
under $p_n(\text{id},\cdot)$, the probability that these  cards form an inversion
is  less than $\frac12$ if $b<\tilde b$ and greater than $\frac12$ if $b>\tilde b$. Furthermore, the  point $b=\tilde b$ where the probability
is equal to $\frac12$ is exactly the  point $b$ where the limiting average rescaled position $E(b)$ is equal to $\frac12$.
Despite the above corollary, the measure $p_n(\text{id},\cdot)$ favors permutations that do not have a lot of  inversions,
in  a sense made precise in Corollary \ref{inverting} below. See also, the remark after that corollary.

\medskip

The results in Theorems \ref{First} and \ref{Last} are local limit theorems. If we had such a local result in
Theorem \ref{general}; namely
$\lim_{n\to\infty}\frac1np_n(\text{id},\{\sigma^{-1}_{b_nn}= [xn]\})=f_b(x)$, rather than only
\begin{equation}\label{4again}
\lim_{n\to\infty}p_n(\text{id},\{\sigma^{-1}_{b_nn}\le xn\})=F_b(x),
\end{equation}
 then it would follow easily that
if $\lim_{n\to\infty}x_n=x$, then
$\lim_{n\to\infty}p_n(\text{id},\{\sigma_{x_nn}\le bn\})= \int_0^bf_b(x)db$.
Unfortunately, we don't see how to prove this rigorously just from \eqref{4again},
nor do we see how to prove directly that
$\lim_{n\to\infty}p_n(\text{id},\{\sigma_{x_nn}\le bn\})$ exists; although it is intuitively obvious
that it does. And if it does exist, then it is easy to show that the corresponding density
must be $h_x(b)\equiv f_b(x)$.
This density function $h_x(b)$, $0\le b\le 1,$ for the limiting rescaled expected card number occupying a position
around $xn$, is not  useful for explicit calculations as is  $f_b(x)$, the
density function for the limiting
rescaled position of a  card with a number around $bn$.  However, we can give its basic behavior, like
we gave the basic behavior of $f_b(x)$ in Corollary \ref{density}. We will prove the following result concerning the behavior of $h_x(b)\equiv f_b(x)$.
\begin{corollary}\label{densityatpos}
The density function $h_x(b)$, the limiting rescaled   card number occupying   a position around $xn$, has the following behavior.

For $x=0$,  one has  $h_0(b)=e^{b-1}, 0< b\le 1$.
This is a sub-probability density with total mass $1-e^{-1}$. In addition there is a $\delta$-mass of size $e^{-1}$ at $b=0$.

For $x=1$, one has $h_1(b)=\frac{e^b}{e-1}, 0\le b\le 1$.

  Let $b_x$, $0\le x< 1$, denote the inverse of the function $x_b=e^{1-b}-(1-b)e$.
For $0<x< 1$, one has

\noindent i. $h_x(0)=\frac{e^{b_x-1}}{e^{b_x}-1}$;

\noindent ii. $h_x(b)$ is increasing and convex on $0<b<b_x$;

\noindent iii. $h_x(b_x^-)=\frac{e^{b_x-1}}{1-e^{-b_x}}$;

\noindent iv. $h_x(b)=e^{b-1},\ b_x<b\le 1$.
\end{corollary}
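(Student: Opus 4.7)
The plan is to verify the claimed behavior by direct computation from the formula $F_b=G_b^{-1}$ in Theorem \ref{general}, using the identity $h_x(b)=f_b(x)$. A preliminary observation is that $x_b=e^{1-b}-(1-b)e$ has derivative $e-e^{1-b}\ge 0$ on $[0,1]$ and maps $[0,1]$ bijectively onto itself, so its inverse $b_x$ is well defined and the condition $x<x_b$ is equivalent to $b>b_x$. This split drives the entire argument.

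For $b_x<b\le 1$, we have $x<x_b$, so Corollary \ref{density}(i) gives $h_x(b)=f_b(x)=e^{b-1}$ directly; this is point (iv). The stated behavior for $x=0$ and $x=1$ then follows from the same formula on the interior of the relevant range, combined with Theorem \ref{First}(i) to pick up the atom $e^{-1}\delta_0$ in the $x=0$ case, and with Theorem \ref{Last}(i) to read off the factor $1/(e-1)$ in the $x=1$ case.

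For $0\le b<b_x$, we have $x>x_b$, so $y:=F_b(x)$ lies in the upper branch of $G_b$ and satisfies $e^{(1-y)e^{-b}}-(1-y)e^{1-b}=x$. The crucial step is the substitution $u:=(1-y)e^{-b}$, under which the equation collapses to
\begin{equation*}
e^u-eu=x,
\end{equation*}
which is \emph{independent of $b$}. Hence $u=u(x)$ depends only on $x$, and differentiating $G_b(F_b(x))=x$ gives
\begin{equation*}
h_x(b)=f_b(x)=\frac{1}{G_b'(y)}=\frac{e^b}{e-e^{u(x)}}.
\end{equation*}
Since $e-e^{u(x)}>0$ throughout the range, $h_x$ is a positive constant times $e^b$ on $[0,b_x)$, which establishes monotonicity and convexity in (ii) and delivers the left-hand limit in (iii) once the constant is identified.

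To identify the constant, I would evaluate the substitution at $b=b_x$, where the upper branch of $G_{b_x}$ begins at $y=1-(1-b_x)e^{b_x}$; this yields $u(x)=1-b_x$, equivalently the defining relation $e^{1-b_x}-(1-b_x)e=x$ for $b_x$. Substituting $u(x)=1-b_x$ into the formula above and simplifying produces the explicit expressions in (i) and (iii). The single conceptual step that makes the proof clean is the choice of substitution $u=(1-y)e^{-b}$ that eliminates $b$ from the defining equation; once this is observed, everything else is routine algebra, and no genuine obstacle remains.
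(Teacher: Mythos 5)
Your proof is correct, but it takes a genuinely different, and in fact cleaner, route than the paper's for the heart of the corollary. The paper gets parts (i), (iii), (iv) and the $x=0,1$ statements from Corollary \ref{density} (plus a separate direct computation of $f_0(x)=1/G_0'(G_0^{-1}(x))$ for part (i)), and proves part (ii) by differentiating $G_b(F_b(x))=x$ twice in $b$, arriving after ``long, tedious calculations'' at explicit formulas for $\frac{df_b(x)}{db}$ and $\frac{d^2f_b(x)}{db^2}$ whose positivity gives monotonicity and convexity. Your substitution $u=(1-y)e^{-b}$ replaces all of this: on the upper branch the defining equation becomes $e^u-eu=x$, which is $b$-free, so $u=u(x)$ and $h_x(b)=1/G_b'(F_b(x))=e^b/(e-e^{u(x)})$ is an exact constant multiple of $e^b$ on $(0,b_x)$ --- a stronger statement than (ii), from which (ii) is immediate and (i), (iii) follow by setting $b=0$ and $b\to b_x$ once one notes $u(x)=1-b_x$ (which follows at once from $x=x_{b_x}=e^{1-b_x}-(1-b_x)e$, with no need to invoke the boundary case $b=b_x$). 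This is fully consistent with the paper's computations, which in your notation simply say $\partial_b f_b=\partial_b^2 f_b=f_b$ on $(0,b_x)$. Two small points deserve to be made explicit: the map $u\mapsto e^u-eu$ is not globally injective (there is a second root beyond $u=1$), but here $u=(1-y)e^{-b}\in[0,1-b]\subset[0,1]$, where the map is strictly decreasing, so the root is unique, $b$-independent, and satisfies $e-e^{u(x)}>0$; and for $x=1$ it is cleaner to quote Corollary \ref{density}(iv), or your own formula with $u(1)=0$, rather than Theorem \ref{Last}, since $h_1(b)$ is by definition $f_b(1)$ and the agreement with Theorem \ref{Last} (like the agreement of the $x=0$ atom with Theorem \ref{First}) is a consistency check rather than a derivation. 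With these remarks added, your argument proves the corollary in full and is arguably preferable to the paper's computation.
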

\noindent \bf Remark.\rm\ The fact that at $x=0$ there is a $\delta$ mass at 0 of size $e^{-1}$ connects
up with Theorem \ref{First}. The corollary shows that the most likely numbers to find in a  position around $xn$, $0<x< 1$,
 are  numbers  slightly smaller than  $nb_x$. If $e^{b_x}<2$, or equivalently,  $x<\frac12 e-(1-\ln 2)e\approx.525$, then the   least likely numbers
to find in  a position around $xn$ are  numbers slightly larger than   $nb_x$; if $x>\frac12 e-(1-\ln 2)e$, then the least likely numbers
to find in  a position around $xn$ are  numbers  on order $o(n)$.
\it In particular,
for all $x\in[0,1]$, the most likely numbers for  a position around $xn$ are  ``right next to'' numbers that are much less likely to be in such a position,
and if $x<\frac12 e-(1-\ln 2)e\approx.525$, then these latter numbers are the least likely ones to be
in such a position.\rm\ See figure \ref{Fi:h_x}.
The probability  measures corresponding to the densities $h_x$ are weakly continuous with respect to $x\in(0,1]$,
and the densities $h_x$, considered as cadlag functions with the Skorohod topology, vary continuously
for $x\in(0,1]$.
\rm

\begin{figure}
\includegraphics[scale=.55]{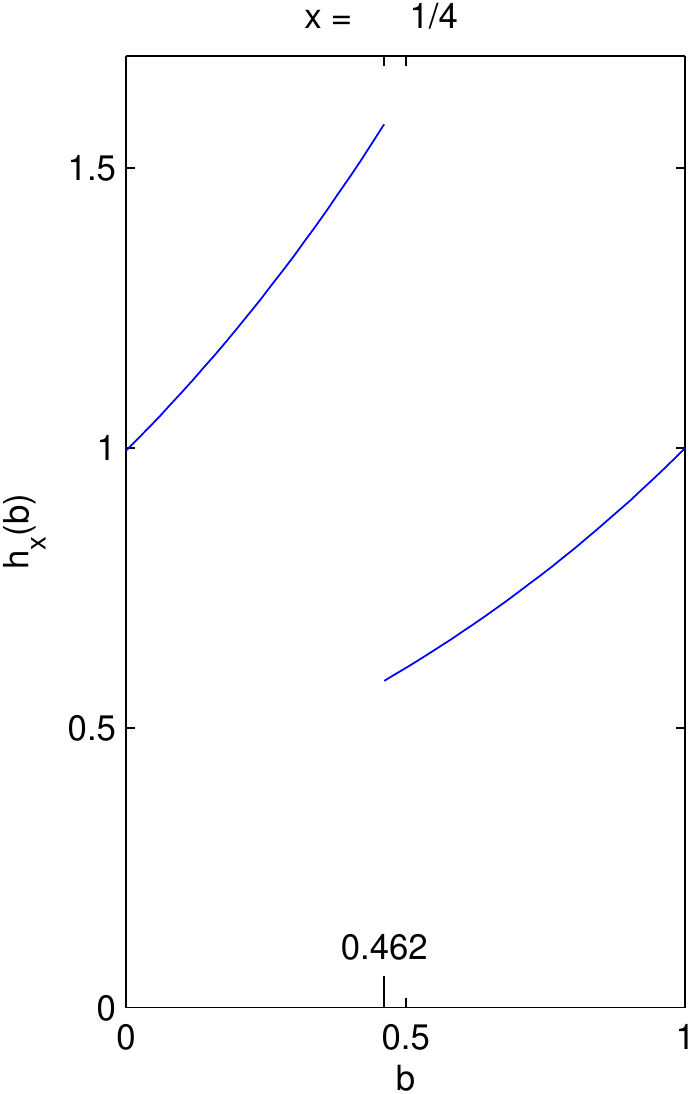}
\includegraphics[scale=.55]{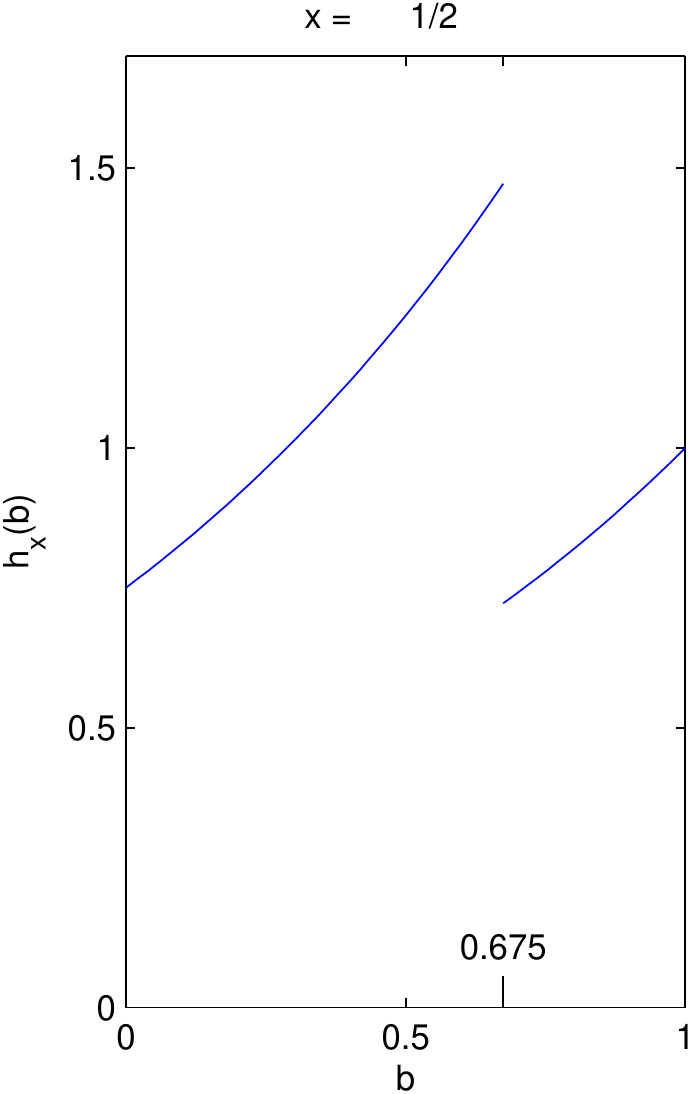}
\includegraphics[scale=.55]{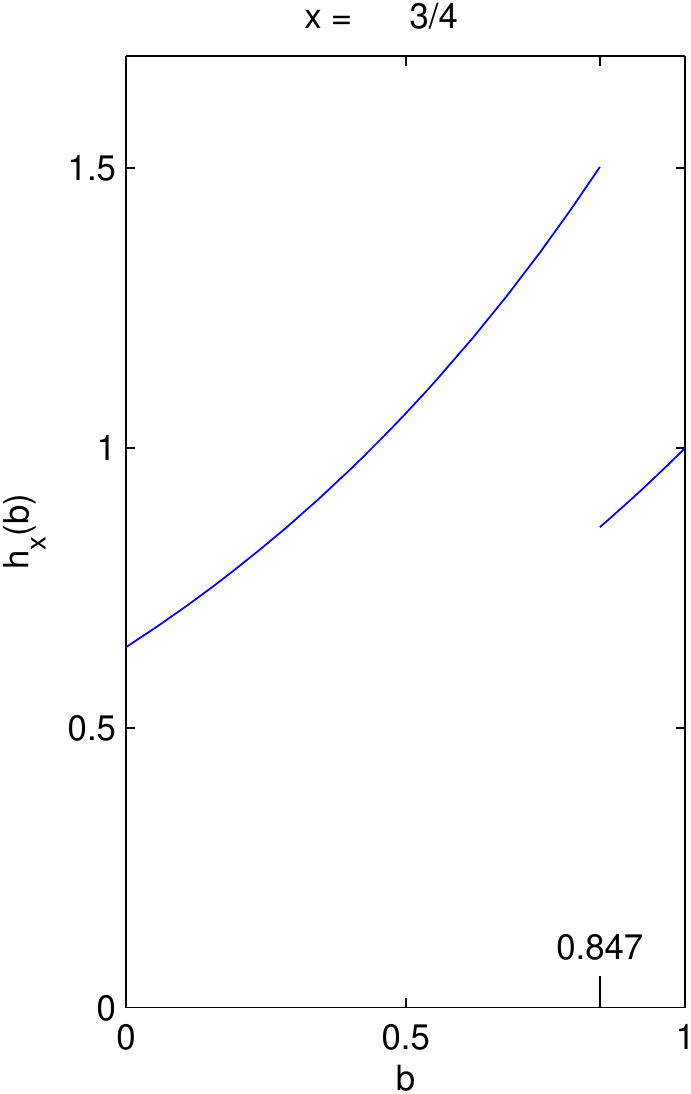}
\caption{Density for limiting rescaled   card number\newline occupying   a position around $xn$.}\label{Fi:h_x}
\end{figure}

\medskip
We now turn to the study of the entire distribution $p_n(\text{id},\cdot)$.
We need to introduce some additional concepts and notation. Fix a positive integer $n$. Let $l=(l_1,\cdots, l_{n-1})$
be an $(n-1)$-vector  of  positive integers
satisfying $i\le l_i\le n-1$. Consider the collection of all integer-valued paths
$\{Y_i\}_{i=1}^n$ satisfying $1\le Y_1\le Y_2\cdots\le Y_n=n$, with the strict inequality $Y_{i+1}>Y_i$ holding if
$Y_i\le l_i$. Note that  one always has  $Y_i\ge i$, for all $i=1,\cdots, n$.
 Call such paths \it nondecreasing $l$-paths of length $n$.\rm\ Denote the number of such paths by $N_n(l)$.
Note that $N_n(l)$ is strictly decreasing in each of its $n-1$ variables.

Recall that for $\sigma\in S_n$ and  $i,j\in[n]$ with $i<j$, the pair $(i,j)$ is called an \it\ inversion\rm\ for $\sigma$ if $\sigma_j<\sigma_i$.
According to our convention, $(i,j)$ is an inversion for $\sigma$ if the card in \it position $i$\rm\
has a higher number than the card in
\it position\rm\  $j$.
Thus, $(i,j)$ is an inversion for the inverse permutation $\sigma^{-1}$ if the card \it numbered\rm\ $i$ appears to the right
of the card \it numbered\rm\ $j$ in the permutation $\sigma$.
In this case, as we have already noted before Corollary \ref{pairs},  we also say that the cards with numbers $i$ and $j$ form an inversion for $\sigma$.
For $2\le j\le n-1$ and $\sigma\in S_n$, let
$$
\begin{aligned}
 & I_j(\sigma)=\sum_{k=1}^{j-1}1_{\sigma^{-1}_k>\sigma^{-1}_j}=
 \text{\# of inversions in}\ \sigma\\
&  \text{  involving the card numbered }\  j\ \text{ and a card  numbered less than}\ j.
\end{aligned}
$$

Define $l(\sigma)=(l_1(\sigma),\cdots,l_{n-1}(\sigma))$, where
 $l_{n-1}(\sigma)=n-1$ and
$$
l_j(\sigma)=j+I_{n-j}(\sigma),\  j=1,\cdots, n-2.
$$
Note that $j\le l_j(\sigma)\le n-1,\  j=1,\cdots, n-2$.

\begin{lemma}\label{counting}
For each
 $l=(l_1,\cdots, l_{n-1})$ satisfying $j\le l_j\le n-1$, for $j=1,\cdots, n-1$, there exist exactly $n$ permutations $\sigma\in S_n$
 satisfying $l(\sigma)=l$.
\end{lemma}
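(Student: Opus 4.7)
The plan is to recognize that the vector $(I_j(\sigma))_{j=1}^n$ is (a version of) the Lehmer / inversion code of $\sigma^{-1}$, and that $l(\sigma)$ packages all but one coordinate of this code; the lemma then reduces to counting the possible values of the single missing coordinate.

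The main tool is the classical bijection
\[
\pi \longmapsto (L_1(\pi), L_2(\pi), \ldots, L_n(\pi)), \qquad L_j(\pi) = \#\{k < j : \pi_k > \pi_j\},
\]
between $S_n$ and $\prod_{j=1}^n \{0, 1, \ldots, j-1\}$. Its inverse processes the code from right to left: $\pi_n = n - L_n(\pi)$, then $\pi_{n-1}$ is the $(L_{n-1}+1)$-st largest of the remaining labels, and so on. Comparing definitions one sees directly that $I_j(\sigma) = L_j(\sigma^{-1})$ for every $j$, so composing with the involution $\sigma \mapsto \sigma^{-1}$ shows that $\sigma \longmapsto (I_1(\sigma), I_2(\sigma), \ldots, I_n(\sigma))$ is also a bijection between $S_n$ and $\prod_{j=1}^n \{0, 1, \ldots, j-1\}$.

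With that in hand, rewrite $l_j(\sigma) = j + I_{n-j}(\sigma)$ (noting that for $j = n-1$ this reads $l_{n-1}(\sigma) = (n-1) + I_1(\sigma) = n-1$, matching the author's separate stipulation) as $I_{n-j}(\sigma) = l_j - j$ for $j = 1, \ldots, n-1$. The admissible range $j \le l_j \le n-1$ corresponds precisely to $0 \le I_{n-j}(\sigma) \le (n-j) - 1$, i.e.\ to the valid Lehmer range for each coordinate $I_m$ with $m \in \{1, \ldots, n-1\}$. Hence specifying $l$ is exactly the same as specifying the first $n-1$ Lehmer coordinates of $\sigma^{-1}$. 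The one unconstrained coordinate is $I_n(\sigma)$, which may independently take any of the $n$ values $0, 1, \ldots, n-1$, and by the bijection each such choice produces a unique $\sigma \in S_n$. This gives exactly $n$ preimages, as claimed.

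The only genuine work is the bijectivity of the Lehmer code invoked in the second step; if one wants the argument self-contained, this can be verified by a short induction on $n$ (peel off $\pi_n = n - L_n(\pi)$ and reduce to $S_{n-1}$). I do not anticipate any further obstacle: the answer $n$ drops out the moment the identification with the inversion code of $\sigma^{-1}$ is in place, and the bookkeeping to check that the constraints match up is entirely mechanical.
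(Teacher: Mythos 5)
Your proof is correct and follows essentially the same route as the paper: the paper's two-line argument (that $l(\sigma)$ ignores $\sigma^{-1}_n$ and that $\sigma$ is uniquely determined by $l(\sigma)$ together with $\sigma^{-1}_n$) is exactly the statement that $(I_1(\sigma),\ldots,I_n(\sigma))$ is the inversion code of $\sigma^{-1}$, which you make explicit via the Lehmer-code bijection, with the free coordinate $I_n(\sigma)=n-\sigma^{-1}_n$ supplying the $n$ preimages. Your range-checking and the reconstruction of $\sigma^{-1}$ from the code are accurate, so this is a complete (and slightly more detailed) version of the paper's proof.
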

\begin{proof}
Note that $l(\sigma)$ does not depend on $\sigma^{-1}_n$, the position in $\sigma$ of the card numbered $n$.
It is easy to see that any $\sigma\in S_n$ is uniquely determined by
the value of  $\sigma^{-1}_n$ and by the condition $l(\sigma)=l$,
where $l=(l_1,\cdots,l_{n-1})$ is as in the statement of the lemma.
\end{proof}

\begin{theorem}\label{basic}
Let $\sigma\in S_n$.
One has
$$
p_n(\text{id},\sigma)=\frac{N_n(l(\sigma))}{n^n}
$$
where $N_n(l)$ denotes the number of nondecreasing $l$-paths of length $n$.
\end{theorem}
Theorem \ref{basic}  gives a  qualitative picture of the nature of the bias in the $p_n(\text{id},\cdot)$-shuffle.
Indeed, using the strict monotonicity of $N(l)$ and the definition of $l(\sigma)$,
the following corollary is immediate from Theorem \ref{basic}.
\begin{corollary}\label{inverting}
\noindent i. $p_n(\text{id},\sigma)$ does not depend on $\sigma^{-1}_n$, the position in $\sigma$ of  card
number $n$;

\noindent ii.
Let $\sigma',\sigma''\in S_n$.
If $I_j(\sigma')\le I_j(\sigma'')$,
for all $j\in\{2,\cdots, n-1\}$, then
$$
p_n(\text{id},\sigma')\ge p_n(\text{id},\sigma''),
$$
with a strict inequality holding if $I_j(\sigma')< I_j(\sigma'')$, for some $j\in\{2,\cdots, n-1\}$.
\end{corollary}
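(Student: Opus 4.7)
The plan is to derive both parts directly from Theorem~\ref{basic}, which gives $p_n(\text{id},\sigma)=N_n(l(\sigma))/n^n$, together with the strict monotonicity of $N_n(l)$ in each of its $n-1$ coordinates (noted in the paragraph defining nondecreasing $l$-paths). The entire argument is really an unpacking of the definition of $l(\sigma)$, so I do not anticipate a substantive obstacle; the only work is making sure the indices line up correctly.

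For part (i), I would begin by recalling the defining formulas: $l_{n-1}(\sigma)=n-1$ and, for $j=1,\ldots,n-2$, $l_j(\sigma)=j+I_{n-j}(\sigma)$. The first entry is a constant independent of $\sigma$. Each $I_{n-j}(\sigma)$ with $j\le n-2$ counts inversions of $\sigma$ that involve the card numbered $n-j\in\{2,\ldots,n-1\}$ and only cards of strictly smaller number; no such inversion can involve card~$n$. Consequently $l_j(\sigma)$ is unaffected by the location $\sigma^{-1}_n$, hence so is $N_n(l(\sigma))$, and (i) follows. (This is exactly the observation used in the proof of Lemma~\ref{counting}.)

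For part (ii), I would translate the hypothesis on inversions into a coordinate-wise comparison of the vectors $l(\sigma')$ and $l(\sigma'')$. Reindexing by $i=n-j$, the hypothesis $I_j(\sigma')\le I_j(\sigma'')$ for all $j\in\{2,\ldots,n-1\}$ is precisely $l_i(\sigma')\le l_i(\sigma'')$ for all $i\in\{1,\ldots,n-2\}$, while $l_{n-1}(\sigma')=l_{n-1}(\sigma'')=n-1$ trivially. Applying the strict monotonicity of $N_n$ in each coordinate yields $N_n(l(\sigma'))\ge N_n(l(\sigma''))$, with the inequality strict whenever the two $l$-vectors differ in at least one coordinate, i.e.\ whenever $I_j(\sigma')<I_j(\sigma'')$ for some $j\in\{2,\ldots,n-1\}$. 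Dividing by $n^n$ delivers the stated inequality, completing the proof.
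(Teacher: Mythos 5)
Your argument is correct and is exactly the one the paper intends: the corollary is stated as immediate from Theorem \ref{basic} together with the definition of $l(\sigma)$ and the strict monotonicity of $N_n(l)$ in each coordinate, which is precisely what you unpack (card $n$ never enters any $I_{n-j}$, and the hypothesis on inversions is a coordinatewise comparison of the $l$-vectors). No gaps; your careful index bookkeeping just makes explicit what the paper leaves to the reader.
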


\noindent \bf Remark.\rm\
Of course, we don't need the theorem to get part (i) of the corollary.
From the definition of the shuffle, it is clear that the distribution of card number n
is uniform.
Part (ii) shows in particular that among cards numbered from 1 to $n-1$, if every such pair
%every inversion of $(\sigma')^{-1}$ is an inversion
%of $(\sigma'')^{-1}$, or equivalently, if
 of cards that forms an inversion for $\sigma'$ also forms an inversion for $\sigma''$,
then $p_n(\text{id},\sigma')\ge p_n(\text{id},\sigma'')$.
Thus, in the above sense,  the more a permutation preserves the order defined by id, but ignoring card number $n$, the more it is favored
by
$p_n( \text{id},\cdot)$.
{\it We have qualified the above sentence with the words ``in the above sense,'' because
 Corollary \ref{pairs} shows that if $n$ is  large and   $b_2>b_1>\tilde b\approx .768$, with $b_2$ close to $b_1$,
 then  $p_n(\text{id},\cdot)$  assigns a  probability greater than $\frac12$  to
 those permutations for which card number $[b_1n]$ and card number $[b_2n]$ form an inversion!}

It  seems quite difficult to estimate $N_n(l)$ for general $l$. However, the maximum and minimum over $l$ can be calculated
explicitly.

\begin{theorem}\label{estimate}
One has
\begin{equation}\label{cat}
2^{n-1}\le
N_n(l)\le \frac1{n+1}\binom {2n}n,
\end{equation}
for all $l=(l_1,\cdots,l_{n-1})$, with $j\le l_j\le n-1$, for $j=1,\cdots, n-1$.
The left hand inequality above is an equality if and only if $l_j=n-1$, for all $j=1,\cdots, n-1$,
and the right hand inequality above is an equality if and only if $l_j=j$, for all $j=1,\cdots, n-1$.
\end{theorem}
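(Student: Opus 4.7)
My approach exploits the strict monotonicity of $N_n(l)$ in each of its coordinates, a property noted in the text just before Lemma \ref{counting}: raising any $l_i$ by one adjoins the new constraint ``$Y_{i+1} > Y_i$'' in the case $Y_i = l_i + 1$ without weakening any other requirement, so $N_n$ is weakly decreasing in each coordinate, and strict decrease is established by exhibiting one nondecreasing $l$-path killed by the new constraint. Since the feasible set is $\{l : j \le l_j \le n-1\}$, its componentwise minimum is $l^{*} := (1,2,\ldots,n-1)$ and its componentwise maximum is $l^{**} := (n-1,n-1,\ldots,n-1)$. By strict monotonicity, $N_n$ attains its maximum over the feasible set uniquely at $l^{*}$ and its minimum uniquely at $l^{**}$. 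It therefore suffices to compute these two extremal values exactly and confirm that they equal $\frac{1}{n+1}\binom{2n}{n}$ and $2^{n-1}$, respectively.

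The evaluation at $l^{**}$ is direct. The defining condition becomes ``$Y_{i+1} > Y_i$ whenever $Y_i \le n-1$,'' which forces each nondecreasing $l^{**}$-path to be strictly increasing until the first time $\tau$ at which $Y_\tau = n$, and constant equal to $n$ thereafter. Such a path is uniquely encoded by the subset $\{Y_1,\ldots,Y_\tau\}\subset \{1,\ldots,n\}$ of values visited before absorption, which may be any subset of size $\tau$ containing $n$; moreover the constraint $Y_i \ge i$ is automatic, since strictly increasing positive integers with $Y_\tau = n$ satisfy $Y_i \ge Y_1 + (i-1) \ge i$. Summing over $\tau$ yields
\[
N_n(l^{**}) \;=\; \sum_{\tau=1}^{n} \binom{n-1}{\tau-1} \;=\; 2^{n-1}.
\]

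The evaluation at $l^{*}$ reduces to a standard Catalan enumeration. For $l_j = j$, the clause ``$Y_{i+1} > Y_i$ if $Y_i \le l_i$'' activates only when $Y_i = i$ (because $Y_i \ge i$), and in that case demands $Y_{i+1} \ge i+1$, which is already part of the general constraint $Y_j \ge j$. So the strict-inequality clause is vacuous, and $N_n(l^{*})$ simply counts nondecreasing sequences $1 \le Y_1 \le \cdots \le Y_n = n$ with $Y_i \ge i$. To each such sequence (setting $Y_0 := 0$) I associate the lattice path from $(0,0)$ to $(n,n)$ that, for $i = 1,\ldots,n$, takes $Y_i - Y_{i-1}$ north steps followed by one east step; since after the $i$th east step the path sits at $(i,Y_i)$, the constraint $Y_i \ge i$ is precisely the condition that the path stay weakly above the diagonal $y = x$. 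Such paths are counted by the Catalan number $C_n = \frac{1}{n+1}\binom{2n}{n}$, completing the proof. The argument is entirely elementary; the only mildly nontrivial observation is that the strict-inequality clause in the $l^{*}$ case is automatically satisfied, and I do not anticipate any real obstacle.
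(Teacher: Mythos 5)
Your proof is correct and follows essentially the same route as the paper: reduce to the two extreme vectors via the (strict) monotonicity of $N_n(l)$, count the $l_j\equiv n-1$ case by summing $\binom{n-1}{\tau-1}$ over the absorption time to get $2^{n-1}$, and count the $l_j=j$ case by a Catalan-type path bijection. Your observation that the strictness clause is implied for $l_j=j$ and your weakly-above-diagonal lattice paths are just a repackaging of the paper's Dyck-path/ballot-sequence encoding, so the two arguments are essentially identical.
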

\begin{remark}
Note that the right hand term in \eqref{cat} is equal to $C_n$, the $n$th Catalan number.
\end{remark}
The  following corollary is immediate  from  Theorems
\ref{basic} and \ref{estimate}.
\begin{corollary}\label{3}
One has
\begin{equation}\label{comb}
\frac{2^{n-1}}{n^n}\le p_n(\text{id},\sigma)\le\frac{\binom{2n}n}{(n+1)n^n}.
\end{equation}
The right hand inequality above is an equality if and only if $\sigma$ possesses the increasing
subsequence $\{1,\cdots, n-1\}$, and the left hand inequality is an equality if and only if
$\sigma$ possesses the decreasing subsequence $\{n-1,\cdots, 1\}$.
\end{corollary}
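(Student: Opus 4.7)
The plan is to read off the inequalities directly from Theorem \ref{basic} and Theorem \ref{estimate}, and then translate the two equality cases of Theorem \ref{estimate} back into statements about $\sigma$ via the definition $l_j(\sigma)=j+I_{n-j}(\sigma)$ for $1\le j\le n-2$ (and $l_{n-1}(\sigma)=n-1$).

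First I would substitute Theorem \ref{basic} into Theorem \ref{estimate}: since $p_n(\text{id},\sigma)=N_n(l(\sigma))/n^n$, dividing the two-sided bound $2^{n-1}\le N_n(l)\le \frac{1}{n+1}\binom{2n}{n}$ by $n^n$ gives \eqref{comb}. That is the entire quantitative part of the proof.

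The substance of the argument is then identifying the two equality cases. For the right-hand bound, equality in Theorem \ref{estimate} requires $l_j(\sigma)=j$ for all $j=1,\dots,n-1$, which by the definition of $l_j(\sigma)$ is equivalent to $I_{n-j}(\sigma)=0$ for $j=1,\dots,n-2$, i.e.\ $I_k(\sigma)=0$ for $k=2,\dots,n-1$. Unpacking the definition of $I_k$, this says that for every $k\in\{2,\dots,n-1\}$ no card numbered less than $k$ sits to the right of card number $k$ in $\sigma$; equivalently, cards $1,2,\dots,n-1$ appear in $\sigma$ as an increasing subsequence (with no condition placed on the position of card $n$, in accordance with Corollary \ref{inverting}(i)). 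For the left-hand bound, equality requires $l_j(\sigma)=n-1$ for all $j$, i.e.\ $I_{n-j}(\sigma)=n-1-j$ for $j=1,\dots,n-2$; setting $k=n-j$ this is $I_k(\sigma)=k-1$, the maximum possible value, so every card numbered strictly less than $k$ lies to the right of card number $k$. This precisely says that cards $n-1,n-2,\dots,1$ appear in $\sigma$ as a decreasing subsequence, again with card $n$ free.

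There is really no obstacle here: the corollary is an immediate consequence of Theorems \ref{basic} and \ref{estimate} combined with the bookkeeping above. The only point requiring mild care is the reindexing $k=n-j$ and the accompanying reminder (from Lemma \ref{counting} and Corollary \ref{inverting}(i)) that $\sigma^{-1}_n$ plays no role in $l(\sigma)$, which is why the equality cases are stated in terms of subsequences on $\{1,\dots,n-1\}$ rather than on all of $[n]$.
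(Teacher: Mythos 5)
Your proposal is correct and is exactly the argument the paper intends: the paper states the corollary as immediate from Theorems \ref{basic} and \ref{estimate}, and your substitution plus the translation of the equality cases $l_j(\sigma)=j$ and $l_j(\sigma)=n-1$ into $I_k(\sigma)=0$ and $I_k(\sigma)=k-1$ for $k=2,\dots,n-1$ (with card $n$ unconstrained) is precisely the intended bookkeeping.
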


Note that
the left hand side of \eqref{comb} is $\frac12\frac{2^n}{n^n}$ and the right hand side of \eqref{comb} behaves asymptotically
as $n\to\infty$  like $\frac1{\sqrt\pi n^\frac32}\frac{4^n}{n^n}$, while
the uniform probability measure $U_n(\sigma)=\frac1{n!}$ behaves asymptotically as $n\to\infty$ like
 $\frac1{\sqrt{2\pi n}}\frac{e^n}{n^n}$.
Thus, we have the following tight uniform bounds over $\sigma\in S_n$:
$$
(1+o(1))(\frac{\pi n}2)^\frac12(\frac2e)^n \le \frac{p_n( \text{id},\sigma)}{U_n(\sigma)}\le(1+o(1))\frac{\sqrt2}n(\frac4e)^n,
\ \text{as}\ n\to\infty.
$$
In particular, the \it separation distance\rm\ between $U_n$ and $p_n(\text{id},\cdot)$ approaches 1 exponentially fast as $n\to\infty$.
(Recall that the separation distance $s$ is defined by $s(U_n,p_n(\text{id},\cdot))=\max_{\sigma\in S_n}(1-\frac{p_n(\text{id},\sigma)}{U_n(\sigma)})$.)

%This is as far as we have been able to go. We now pose the two interesting questions
% that we hope this note will encourage someone to work on.

We now pose a question.

\medskip

\noindent \bf Question.\rm\
Consider the random walk with increment distribution given by $p_n(\cdot,\cdot)$.
Letting $(p_n)^{(m)}( \text{id},\cdot)$ denote the $m$-fold convolution of $p_n(\text{id},\cdot)$,  which is the
distribution of the random walk at time $m$ given that it started from id,
how large must   $\{m_n\}_{n=1}^\infty$ be so that
 $\lim_{n\to\infty}||U_n-(p_n)^{m_n}( \text{id},\cdot)||_{\text{TV}}$ equals 0,
and how small  must $\{m_n\}_{n=1}^\infty$  be so that it equals 1?

In light of the discussion below, one would expect that $m_n$ will be on the order $\log n$.
In order to use Theorem \ref{basic} to answer this question,  one needs good bounds  on  $N_n(l)$ for general $l$.
This seems to be  a quite difficult combinatorial problem.
It follows from Theorem \ref{basic} that this random walk is not reversible.

\begin{corollary}\label{2}
The random walk on $S_n$ with increment transition measure  $p_n(\text{id},\cdot)$ is not reversible.
\end{corollary}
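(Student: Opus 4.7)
The plan is to reduce reversibility (with respect to the uniform stationary distribution $U_n$) to a symmetry condition on $p_n(\text{id},\cdot)$, and then violate that condition using one explicit permutation. For a random walk on the finite group $S_n$ with increment distribution $q$, the transition kernel $P(\rho,\tau)=q(\rho^{-1}\tau)$ has $U_n$ as stationary measure, and the detailed balance equations $U_n(\rho)P(\rho,\tau)=U_n(\tau)P(\tau,\rho)$ reduce to $q(\pi)=q(\pi^{-1})$ for every $\pi\in S_n$. So to prove non-reversibility it suffices to exhibit a single $\sigma\in S_n$ with $p_n(\text{id},\sigma)\neq p_n(\text{id},\sigma^{-1})$.

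To produce such a $\sigma$, I would invoke Corollary \ref{3}, which identifies the maximizers of $p_n(\text{id},\cdot)$ as exactly those permutations in which the cards numbered $1,2,\ldots,n-1$ appear from left to right in increasing order, with strict inequality otherwise. The candidate I propose is $\sigma=(n,1,2,\ldots,n-1)$, i.e.\ $\sigma_1=n$ and $\sigma_{j+1}=j$ for $j=1,\ldots,n-1$. Since the cards $1,2,\ldots,n-1$ sit in positions $2,3,\ldots,n$ in their natural order, $\sigma$ contains the increasing subsequence $\{1,\ldots,n-1\}$, so $p_n(\text{id},\sigma)$ attains the maximum value $\binom{2n}{n}/((n+1)n^n)$.

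A direct computation gives $\sigma^{-1}=(2,3,\ldots,n,1)$; reading this permutation from left to right, the cards $1,2,\ldots,n-1$ appear in the relative order $2,3,\ldots,n-1,1$, which is not increasing, so $\sigma^{-1}$ is not a maximizer. By the strict inequality clause of Corollary \ref{3}, $p_n(\text{id},\sigma)>p_n(\text{id},\sigma^{-1})$, establishing non-reversibility for $n\geq 3$ (the range in which the corollary is nontrivial, consistent with the Introduction's observation that $p_n(\text{id},\cdot)$ cannot be uniform for $n\geq 3$).

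There is essentially no obstacle. Once Corollary \ref{3} is in hand the argument is a one-example verification; the only mildly subtle point is noting that for a group-invariant random walk, reversibility with respect to the uniform measure is equivalent to invariance of the increment distribution under inversion, which is standard.
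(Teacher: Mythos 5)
Your proposal is correct and follows essentially the same route as the paper: the paper reduces reversibility to the condition $p_n(\text{id},\sigma)=p_n(\text{id},\sigma^{-1})$ for all $\sigma$ and notes from Theorem \ref{basic} that this fails, while you make the failure explicit by using Corollary \ref{3} to exhibit the witness $\sigma=(n,1,2,\ldots,n-1)$, whose inverse $(2,3,\ldots,n,1)$ loses the increasing subsequence $\{1,\ldots,n-1\}$ and hence has strictly smaller probability for $n\ge3$. This simply fills in the "easy to see" step of the paper's argument.
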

\begin{proof}
From the formula in Theorem \ref{basic}, it is easy to see that the equality $p_n(\text{id},\sigma)=p_n(\text{id},\sigma^{-1})$
does not hold for all $\sigma\in S_n$.
\end{proof}

We prove
%Theorem \ref{First} in section 2, Theorem \ref{Last} in section 3, Theorem \ref{basic} in section 4
%and Theorem \ref{estimate} in section 5.
Theorems \ref{First}-\ref{estimate} in sections 2-7 respectively.
The proofs of Corollaries \ref{joint}, \ref{pairs} and \ref{densityatpos} are given immediately after the proof of Theorem \ref{general}.
%In section 6 we will show how
%to embed $N_n(l)$  in a set of recursion relations. We then use this to define a generating function and
%do some analysis with it.
\medskip

The original  motivation for this paper comes from the results on mixing times for a number of classical shuffles; in particular,
the \it random to random insertion\rm\ shuffle,  a random walk on $S_n$ whose transition is implemented by choosing
 a card at random, removing it from the row, and
then reinserting it in a random position in the row.
Denote this random walk by $\{X_m\}_{m=0}^\infty$ and
 let $P^{(n)}_\sigma$ denote probabilities for the random walk
starting from $\sigma$. The random walk is irreducible and the
uniform distribution $U_n$ is its invariant measure. It's
aperiodic since $P_\sigma^{(n)}(X_1=\sigma)=\frac1n$. Thus
$P_{\text{id}}^{(n)}(X_m\in\cdot)$ converges to $U_n$ as
$m\to\infty$. One is interested in the rate of   convergence
in the total variation norm as the parameter $n$ grows.
It is known that the mixing time is on the order $n\log n$. A long-standing open problem is to establish the \it cut-off phenomenon\rm; namely to establish
the  existence
of a $c^*$ such that if $m_n\ge cn\log n$ with $c>c^*$, then
$\lim_{n\to\infty}||P_{\text{id}}^{(n)}(X_{m_n}\in\cdot)-U_n||_{\text{TV}}=0$, and if
$m_n\le cn\log n$ with $c<c^*$, then
$\lim_{n\to\infty}||P_{\text{id}}^{(n)}(X_{m_n}\in\cdot)-U_n||_{\text{TV}}=1$.
It has been conjectured that $c^*=\frac34$, and the lower bound $c*\ge\frac34$ has been proven very recently using
delicate probabilistic estimates \cite{S}.
The best know upper bound is $c^*\le 2$, which was obtained by analytic methods \cite{SZ}.
For other similar looking
 shuffles, such as the \it random transposition
shuffle\rm\ (where at each stage, two cards are selected independently---so the same card might be selected twice---and
then their positions are swapped)  and the \it top to random insertion shuffle \rm\ (where at each stage, the current top
card (left-most card in our setup) is removed and randomly reinserted),
the cut-off phenomenon has been proven with $m_n$ in the same form
as above, with $c^*=\frac12$ and $c^*=1$ respectively \cite{D}.

Note that the mixing times of all the shuffles above are on the order $n\log n$.
Now recall that the coupon collector's problem is the problem of determining how many samples of an IID random variable, distributed
uniformly on $[n]$, are required until every number has been selected at least once. Denoting the required number of samples by $T_n$,
it is well known that  $\lim_{n\to\infty}P(T_n\ge n\log n+c_nn)$ equals 0 if $\lim_{n\to\infty}c_n=\infty$
and equals 1 if  $\lim_{n\to\infty}c_n=-\infty$. More delicate estimates show that if $T_{n;k}$ denotes the number of samples
required until all but $k$ cards are selected once, then
 $\lim_{n\to\infty}P(T_{n,n^l}\ge (1-l)n\log n+c_nn)$ equals 0 or 1 with $c_n$ as above.
The coupon collector phenomenology is an integral part of the proofs of some of the results noted above.
This leads one to wonder whether the order $n\log n$  for mixing in the above shuffles is caused exclusively by the coupon
collector's phenomenology, that is exclusively by the fact that
one needs order $n\log n$ shuffles to move most of the cards at least once,
 or whether this order is inherent in these shuffles
for additional reasons.
(Indeed, after order $n\log n$ shuffles, most of the cards have been removed and reinserted many times.)
It was natural then to consider a shuffle that moved every card exactly once.
To make such a model as close as possible in spirit to the random to random insertion shuffle, one should randomize  the order
in which the $n$ cards are removed and reinserted exactly once. However, this seemed intractable, so we were led to study the problem
presented in this paper, where the order in which the cards are removed and reinserted is not random, but rather is the original
left to right order of the cards. We admit that this is no longer the appropriate model, however, we think the results
 obtained here are of independent interest.
As was noted, the fact that $n!\nmid n^n$ when $n\ge3$ shows immediately that the distribution of our shuffle
 cannot be uniform after one shuffle.
If one randomizes the order in which the $n$ cards are removed and reinserted, then this argument breaks down.
However, even this shuffle does not give the uniform distribution; indeed,
one can check by hand
that for $n=3$, the resulting  probabilities  can take on the values
 $\frac{26}{162},\frac{27}{162}$ and $\frac{28}{162}$.

The reason we use the terminology \it card-cyclic \rm\ is that in the card-shuffling literature
the term  \it cyclic to random shuffle \rm\ (by which one means cyclic to random transposition shuffle) is used for the  shuffle
where  at step $k$ one takes the card  \it currently in position\rm\  $k\ \text{mod}\ n$ and transposes it with a random card. This kind
of shuffle is  \it position-cyclic,\rm\ whereas ours is \it card-cyclic.\rm\
In position cyclic shuffles, after one cycle, there are usually many cards that have not been moved at all.
For results on position-cyclic  to random transposition shuffles in the spirit of some of the results in this paper, see
\cite{RB}, \cite{SS}, \cite{GM}. For results on position-cyclic to random transposition shuffles in the spirit
of  the question we posed above, see  \cite{M} and \cite{MPS}.
\section{Proof of Theorem \ref{First}}
We first derive the exact combinatorial formula for $p_n(\text{id},\{\sigma_1=j\})$.
Of course we have $p_n(\text{id},\{\sigma_1=n\})=\frac1n$.
%We will call the first position in the deck the ``highest'' position, and the last position in the deck the
%``lowest
Now consider $1\le j\le n-1$.
If  card number $j$ is moved to the $k$-th position, with $2\le k\le n-j+1$, then   at the end of the shuffle it will be in the first position
if and only if the following occur. Cards numbered 1 up to $j-1$, which were moved before card number $j$ was moved,
must move successively to the right of card number $j+k-1$. If this occurs, then after card number $j$ is moved to position $k$,
the cards numbered $j+1$ up to $j+k-1$ will be to the left of card number $j$. These  cards numbered $j+1$ to $j+k-1$
 now must move successively to the right
of card number $j$. If this occurs, then card number $j$ will be in the first position. Now cards numbered
$j+k$ up to $n$ must all move to positions greater or equal to two, so that card number $j$ remains in the first position.
We now calculate the probability of this occurring.
The probability that cards numbered  1 up to $j-1$ move successively  to the right of card number $j+k-1$ is
$\prod_{l=2}^{j} \frac{n-j-k+l}n$. The probability that cards numbered $j+1$ to $j+k-1$, which occupy
the first $k-1$ positions,   move successively
to the right of card number $j$, which occupies the $k$-th position, is
$\prod_{l=1}^{k-1}\frac{n-k+l}n$. The probability that cards numbered
$j+k$ up to $n$ all move to positions greater or equal to two is $(\frac{n-1}n)^{n-j-k+1}$.
Thus, conditioned on card number $j$ moving to position $k$, with $2\le k\le n-j+1$, the probability
that card number $j$ will end up in the first position is
$\frac{(n-1)!}{(n-j-k+1)!}\frac{(n-1)^{n-j-k+1}}{n^{n-1}}$.
Conditioned on card number  $j$ moving  to position $k$ with $k>n-j+1$, the above considerations
show that the probability of it ending up in the first position is zero.

Now consider the case that $k=1$; that is,  $j$ is moved to the first position.
At the end of the shuffle,  card number $j$ will be in the first position if and only if
the following occur. Cards numbered 1 up to $j-1$ may move unrestrictedly.
Then after card number $j$ is moved to the first position, cards numbered
$j+1$ to $n$ must move to positions greater or equal to two, so that card number $j$ remains in the
first position. Thus, conditioned on  card $j$ moving to the first position, the probability that it will
end up in the first position is $(\frac{n-1}n)^{n-j}$.

From the above considerations and calculations, we conclude that
\begin{equation}\label{combform}
%\begin{aligned}
p_n(\text{id},\{\sigma_1=j\})=\frac1n(\frac{n-1}n)^{n-j}+
\frac{(n-1)!}{n^n}\sum_{k=2}^{n-j+1}\frac{(n-1)^{n-j-k+1}}{(n-j-k+1)!}.
%\end{aligned}
\end{equation}

We now prove each of the three parts of the theorem.

\noindent \it Proof of (iii).\rm\
Consider first the case that $j=d_nn^\frac12$, with $\lim_{n\to\infty}d_n=d\in[0,\infty)$.
With a small change in notation, the proof also works with $d=\infty$.
We break up the sum in \eqref{combform} into three parts.
Fix a large $M$.
We look at the sum as $k$  runs from 2 to $[Mn^\frac12]$, from $[Mn^\frac12]+1$ to $[\frac12n]$, and from $[\frac12n]+1$ to $n-j+1$.
We begin with the last sum. Let $k=[(1-c)n]$ with $c\in(0,\frac12)$.
By looking at the ratio of two consecutive terms, it follows that
for $0\le x\le n-2$, the expression $\frac{(n-1)^x}{x!}$ is increasing in $x$.
Thus,
\begin{equation*}
\frac{(n-1)^{n-j-k+1}}{(n-j-k+1)!}\le\frac{(n-1)^{\frac n2}}{[\frac n2]!}\sim\frac{(2e)^\frac n2}{\sqrt{\pi en}},\ \text{as}\ n\to\infty.
\end{equation*}
Using this along with the fact that   $\frac{(n-1)!}{n^n}\sim\frac1ne^{-n}\sqrt{2\pi n}$ as $n\to\infty$,
we have
\begin{equation}\label{third}
\frac{(n-1)!}{n^n}\sum_{k=[\frac12n]+1}^{n-j+1}\frac{(n-1)^{n-j-k+1}}{(n-j-k+1)!}
\le\frac1{\sqrt{2e}}(\frac2e)^\frac n2,\ \text{for large}\ n.
\end{equation}

We now consider the first sum, as $k$ runs from 2 to $[Mn^\frac12]$.
For
$k=[cn^\frac12]$,  with $c\in(0,M]$, we write
\begin{equation}\label{calc1}
\begin{aligned}
&\frac{(n-1)^{n-j-k+1}}{(n-j-k+1)!}=\\
&\big(\frac{n-1}{n-d_nn^\frac12-cn^\frac12+1}\big)^{(n-d_nn^\frac12-cn^\frac12+1)}~
\frac{(n-d_nn^\frac12-cn^\frac12+1)^{(n-d_nn^\frac12-cn^\frac12+1)}}{(n-d_nn^\frac12-cn^\frac12+1)!}.
\end{aligned}
\end{equation}
%\sim
%(\frac{n-1}{n-cn^\frac12})^{n-cn^\frac12}(2\pi n)^{-\frac12}\exp(n-\gi),\ \text{as}\ n\to\infty.
As $n\to\infty$, we have
\begin{equation}\label{calc2}
\begin{aligned}
&(n-d_nn^\frac12-cn^\frac12+1)\log(\frac{n-1}{n-d_nn^\frac12-cn^\frac12+1})=\\
&(n-d_nn^\frac12-cn^\frac12+1)\log(1+\frac{d_nn^\frac12+cn^\frac12-2}{n-d_nn^\frac12-cn^\frac12+1})=\\
& (d_nn^\frac12+cn^\frac12-2)-\frac{(d_n+c)^2}2+O(n^{-\frac12}),
\end{aligned}
\end{equation}
where the term $O(n^{-\frac12})$ is uniform over $c\in(0,M]$.
Using \eqref{calc2} in \eqref{calc1}, we have as $n\to\infty$,
\begin{equation*}
\frac{(n-1)^{n-j-k+1}}{(n-j-k+1)!}\sim
e^{(d_n+c)n^\frac12-2-\frac{(d_n+c)^2}2}~\frac{e^{(n-d_nn^\frac12-cn^\frac12+1)}}{\sqrt{2\pi n}}, \ j=d_nn^\frac12, k=cn^\frac12,
\end{equation*}
and then
\begin{equation*}
\frac{(n-1)!}{n^n}\frac{(n-1)^{n-j-k+1}}{(n-j-k+1)!}\sim\frac1{ne}e^{-(d_n+c)^2}, \ j=d_nn^\frac12, k=cn^\frac12.
\end{equation*}
Thus, as $n\to\infty$,
\begin{equation}\label{fornext}
\frac{(n-1)!}{n^n}\sum_{k=2}^{[Mn^\frac12]}\frac{(n-1)^{n-j-k+1}}{(n-j-k+1)!}\sim\frac1{ne}\sum_{k=2}^{[Mn^\frac12]}
e^{-\frac12(d+\frac k{\sqrt n})^2},
\end{equation}
from which it follows that
\begin{equation}\label{1st}
\lim_{n\to\infty}n^\frac12\frac{(n-1)!}{n^n}\sum_{k=2}^{[Mn^\frac12]}\frac{(n-1)^{n-j-k+1}}{(n-j-k+1)!}=\frac1e\int_0^Me^{-\frac12(d+y)^2}dy.
%=\frac1e\int_d^{d+M}e^{-\frac12(d+x)^2}dx.
\end{equation}

We now consider the second sum, as $k$ runs from $[Mn^\frac12]+1$ to $[\frac12n]$.
For $x\in(0,2]$, one has $\log (1+x)\le x-\frac1{18}x^2$. Using this and the fact that
 $0<\frac{j+k-2}{n-j-k+1}<2$, for large $n$, we have for large $n$,
\begin{equation}\label{second}
\begin{aligned}
&\log(\frac{n-1}{n-j-k+1})=\log(1+\frac{j+k-2}{n-j-k+1})\le\\
&\frac{j+k-2}{n-j-k+1}-\frac1{18}(\frac{j+k-2}{n-j-k+1})^2, \ [Mn^\frac12]+1\le k\le [\frac12n].
\end{aligned}
\end{equation}
Using \eqref{second}, we have as $n\to\infty$,
\begin{equation}\label{second2}
\begin{aligned}
&\frac{(n-1)^{n-j-k+1}}{(n-j-k+1)!}=(\frac{n-1}{n-j-k+1})^{n-j-k+1}\frac{(n-j-k+1)^{n-j-k+1}}{(n-j-k+1)!}\le\\
&\frac{(n-j-k+1)^{n-j-k+1}}{(n-j-k+1)!}e^{\big(j+k-2-\frac1{18}\frac{(j+k-2)^2}{n-j-k+1}\big)}\le\\
&(1+o(1))\frac{e^{n-j-k+1}}{\sqrt{2\pi(n-j-k+1)}}e^{\big(j+k-2-\frac{(j+k-2)^2}{18n}\big)}, \ \text{for}\ [Mn^\frac12]+1\le k\le [\frac12n].
\end{aligned}
\end{equation}
From \eqref{second2}, we obtain
\begin{equation}\label{second3}
\begin{aligned}
&\frac{(n-1)!}{n^n}\frac{(n-1)^{n-j-k+1}}{(n-j-k+1)!}\le\frac1n\big(1+o(1)\big)\frac{\sqrt2}ee^{-\frac{(j+k-2)^2}{18n}},\\
& \text{for}\ [Mn^\frac12]+1\le k\le[\frac12n],
\ \text{as}\ n\to\infty.
\end{aligned}
\end{equation}
Thus, similar to \eqref{fornext} and \eqref{1st}, we conclude that
\begin{equation}\label{finalsecond}
\limsup_{n\to\infty}n^\frac12\frac{(n-1)!}{n^n}\sum_{k=[Mn^\frac12]+1}^{[\frac12n]}\frac{(n-1)^{n-j-k+1}}{(n-j-k+1)!}\le
\frac{\sqrt2}e\int_M^\infty e^{-\frac1{18}(d+y)^2}dy.
\end{equation}
Using  \eqref{third}, \eqref{1st}, \eqref{finalsecond} and \eqref{combform}, and letting $M\to\infty$,  we
conclude that
$$
\lim_{n\to\infty}n^\frac12p_n(\text{id},\{\sigma^{-1}(1)=d_nn^\frac12\})=\frac1e\int_d^\infty e^{-\frac12y^2}dy.
$$

To prove the final statement in part (iii), we need to show that
\begin{equation}\label{integral}
\int_0^\infty(\int_x^\infty e^{-\frac12 y^2}dy)dx=1.
\end{equation}
Note that $F(x)=\sqrt\frac2\pi\int_0^xe^{-\frac12y^2}dy$ is the distribution of $|Z|$, where $Z\sim\text{N}(0,1)$.
Thus
\begin{equation}\label{distfunc}
\int_0^\infty(1-F(x))dx=E|Z|=\sqrt\frac2\pi\int_0^\infty x\sqrt\frac2\pi e^{-\frac12x^2}dx=\sqrt\frac2\pi.
\end{equation}
But $1-F(x)=\sqrt\frac2\pi\int_x^\infty e^{-\frac12y^2}dy$. Substituting this in \eqref{distfunc} gives
\eqref{integral}.
\medskip

\noindent \it Proof of part (i).\rm\
Now consider the case that $j=b_nn$ with $\lim_{n\to\infty}b_n=b\in(0,1]$.
As noted above, $\frac{(n-1)^x}{x!}$ is increasing for $0\le x\le n-2$. Thus, letting $\delta=\frac b2>0$, for sufficiently large $n$, one has for all $k$,
\begin{equation*}
\frac{(n-1)^{(n-j-k+1)}}{(n-j-k+1)!}\le\frac{(n-1)^{(n-1)(1-\delta)}}{[(n-1)(1-\delta)]!}\sim\frac1{\sqrt{2\pi n(1-\delta)}}(\frac e{1-\delta})^{(1-\delta)(n-1)},
\end{equation*}
and then for some constant $C_0>0$,
\begin{equation*}
\frac{(n-1)!}{n^n}\frac{(n-1)^{(n-j-k+1)}}{(n-j-k+1)!}\le \frac{C_0}n\big((1-\delta)^{(1-\delta)}e^\delta\big)^{-n}.
\end{equation*}
One can check that $e^x(1-x)^{1-x}$, with $x\in[0,1)$ attains its minimum  value at $x=0$, where it equals 1.
Thus,  for some $\epsilon>0$,
we have
\begin{equation}\label{ordern}
\frac{(n-1)!}{n^n}\sum_{k=2}^{n-j+1}\frac{(n-1)^{n-j-k+1}}{(n-j-k+1)!}\le\frac{C_0}{(1+\epsilon)^n}.
\end{equation}
Using \eqref{ordern} along with \eqref{combform}, it follows that
$$
\lim_{n\to\infty}np_n(\text{id},\{\sigma_1=b_nn\})=e^{b-1}.
$$
\medskip

\noindent \it Proof of part (ii).\rm
We now consider the case that $j=b_nn$ with $\lim_{n\to\infty}b_n=0$ and with $\liminf_{n\to\infty}\frac{n^{\frac12}}{\sqrt{\log n}}b_n>\sqrt2$.
%Without loss of generality, assume that $c_n\ge l_nn^\frac12$ for some $l\in(\frac12,1)$.
For some $\epsilon>0$ and large $n$, we can write $j=l_nn^\frac12$, with $l_n\ge \sqrt{2(1+\epsilon)\log n}$ and $l_n=o(n^\frac12)$.
Since $\frac{(n-1)^x}{x!}$ is increasing for $0\le x\le n-2$, we have for
sufficiently large  $n$ and all $k$ that
\begin{equation}\label{intermediate}
\frac{(n-1)^{(n-j-k+1)}}{(n-j-k+1)!}\le\frac{(n-1)^{n-l_nn^\frac12}}{(n-l_nn^\frac12)!}
\sim(1+\frac{l_nn^\frac12-1}{n-l_nn^\frac12})^{(n-l_nn^\frac12)}\frac{e^{n-l_nn^\frac12}}{\sqrt{2\pi n}}.
\end{equation}
We have
\begin{equation}\label{logagain}
(n-l_nn^\frac12)\log(1+\frac{l_nn^\frac12-1}{n-l_nn^\frac12})=l_nn^\frac12-1-\frac12l_n^2(1+o(1)), \ \text{as}\ n\to\infty.
\end{equation}
From \eqref{intermediate} and \eqref{logagain}, it follows that for some $C_1>0$,
\begin{equation*}
\frac{(n-1)!}{n^n}\frac{(n-1)^{(n-j-k+1)}}{(n-j-k+1)!}\le \frac{C_1}ne^{-\frac12l_n^2}\le\frac{C_1}{n^{2+\epsilon}},
\end{equation*}
and thus
\begin{equation}\label{finalinter}
\frac{(n-1)!}{n^n}\sum_{k=2}^{n-j+1}\frac{(n-1)^{(n-j-k+1)}}{(n-j-k+1)!}\le \frac{C_1}{n^{1+\epsilon}}.
\end{equation}
From  \eqref{finalinter} with \eqref{combform}, it follows that
$$
\lim_{n\to\infty}np_n(\text{id},\{\sigma_1=b_n\})=e^{-1}.
$$
\hfill $\square$

\section{Proof of Theorem \ref{Last}}
We first derive the exact combinatorial formula for $p_n(\text{id},\{\sigma_n=j\})$.
Of course, $p_n(\text{id},\{\sigma_n=n\})=\frac1n$. Now consider $1\le j\le n-1$.
If card number $j$ is moved to the $k$-th position, with $j\le k\le n$,
then at the end of the shuffle it will be in the last position if and only if the following occur.
Cards numbered 1 to $j-1$, which were moved before card number $j$ was moved, must all move to the left of card number
$k+1$ (if $k=n$,  these cards can move unrestrictedly). If this occurs, then after card number $j$ is moved to position $k$, the cards numbered $1,\cdots, j-1$
and $j+1,\cdots, k$  will be to the left
of card number $j$. Now cards numbered $j+1,\cdots, k$ must all move to positions smaller or equal to $k-1$ in order that they remain
to the left of card number $j$. And then cards numbered $k+1,\cdots, n$ must successively move to the left of card number
$j$ (if $k=n$, this step is vacuous).  We now calculate the probability of this occurring. The probability that cards numbered
1 up to $j-1$ move to left of card number $k+1$ is $(\frac kn)^{j-1}$.
The probability that cards numbered $j+1,\cdots, k$, which are all in positions smaller than or equal to $k-1$, will all move
to positions  smaller than or equal to $k-1$ is $(\frac{k-1}n)^{k-j}$. The probability that cards numbered
$k+1,\cdots, n$, which occupy the positions $k+1,\cdots, n$, move successively to the left of card number $j$ which occupies the $k$-th position,
is $\prod_{l=k}^{n-1}\frac ln$. Thus, conditioned on card number $j$ moving to position $k$, with $j\le k\le n$, the probability
that card number $j$ will end up in the last position is
$\frac{k^{j-1}(k-1)^{k-j}}{(k-1)!}\frac{(n-1)!}{n^{n-1}}$. Conditioned on card number $j$ moving  to position $k$ with $1\le k\le j-1$,
the above considerations show that  the probability of it ending up in the last position is zero.

From the above considerations and calculations, we conclude that
\begin{equation}
p_n(\text{id},\{\sigma_n=j\})=\frac{(n-1)!}{n^n}\sum_{k=j}^n\frac{k^{j-1}(k-1)^{k-j}}{(k-1)!},
\end{equation}
which we rewrite in the form
\begin{equation}\label{comboformlast}
p_n(\text{id},\{\sigma_n=j\})=\frac{(n-1)!}{n^n}\sum_{m=j-1}^{n-1}\big(\frac{m+1}m\big)^{j-1}~\frac{m^m}{m!},
\end{equation}
where $0^0$ and $(\frac10)^0$ are understood to be 1.
Note that the formula is also correct for $j=n$.

We prove the following estimate.
\begin{lemma}\label{mmm}
\begin{equation}
\sum_{m=1}^{n-1}\frac{m^m}{m!}\sim\frac1{(e-1)\sqrt{2\pi}}\frac{e^n}{\sqrt n}, \ \text{as}\ n\to\infty.
\end{equation}
\end{lemma}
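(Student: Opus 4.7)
The plan is to reduce the sum to an essentially geometric one via Stirling's approximation, and then exploit the fact that such a sum is concentrated in its last few terms. Stirling's formula gives
$$
\frac{m^m}{m!}=\frac{e^m}{\sqrt{2\pi m}}\bigl(1+O(1/m)\bigr).
$$
I would plug this in and deal with the error term separately: since $e^m/\sqrt{m}$ grows (asymptotically) by a factor of $e$ each step, the $O(1/m)$ corrections contribute at most $O(1/n)$ times the leading asymptotic and can be ignored. The main task then is to analyze
$$
S_n\equiv\sum_{m=1}^{n-1}\frac{e^m}{\sqrt{m}}.
$$

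The key observation is that the ratio of consecutive terms in $S_n$ tends to $e>1$, so $S_n$ is dominated by the terms with $m$ close to $n-1$. I would reindex via $m=n-1-k$ and factor out the largest term:
$$
S_n=\frac{e^{n-1}}{\sqrt{n-1}}\sum_{k=0}^{n-2}e^{-k}\sqrt{\frac{n-1}{n-1-k}}.
$$
For each fixed $k$, the summand converges to $e^{-k}$ as $n\to\infty$. A uniform bound (e.g.\ $\sqrt{(n-1)/(n-1-k)}\le\sqrt{2}$ for $k\le(n-1)/2$, together with the observation that the contribution of the remaining range $k>(n-1)/2$ is exponentially smaller than $e^{-(n-1)/2}$ after multiplying out) lets me apply dominated convergence. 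The limiting sum is $\sum_{k=0}^{\infty}e^{-k}=e/(e-1)$, hence
$$
S_n\sim\frac{e^{n-1}}{\sqrt{n-1}}\cdot\frac{e}{e-1}\sim\frac{e^n}{(e-1)\sqrt{n}}.
$$
Multiplying by the $1/\sqrt{2\pi}$ from Stirling yields the stated asymptotic.

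There is no real obstacle here: the argument is a standard ``last-term-dominates'' estimate for a super-geometric series. The only place to be careful is verifying that the cumulative Stirling error, and the tail where $k$ is comparable to $n$, are both negligible relative to the leading term $e^n/\sqrt{n}$; both follow from crude bounds since the sum is exponentially concentrated near $m=n-1$.
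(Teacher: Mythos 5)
Your proof is correct, and it shares the paper's first step --- Stirling's formula reduces the problem to the asymptotics of $\sum_{m=1}^{n-1}e^m/\sqrt{m}$, with the accumulated Stirling errors controlled exactly as you indicate (they are bounded by a constant times $\sum_{m\le n-1}e^m/m^{3/2}=O(e^n/n^{3/2})$, hence negligible). Where you diverge is in how that reduced sum is evaluated. The paper uses Abel summation by parts with $a(m)=e^m$, $b(m)=m^{-1/2}$ and $A(m)=(e^{m+1}-e)/(e-1)$, then a mean value theorem bound to show the summed-by-parts remainder is of lower order, extracting the constant $1/(e-1)$ from the closed form of $A(n-1)b(n-1)$. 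You instead reindex $m=n-1-k$, factor out the top term $e^{n-1}/\sqrt{n-1}$, and apply dominated convergence to $\sum_k e^{-k}\sqrt{(n-1)/(n-1-k)}$, with the crude $\sqrt{n-1}\,e^{-(n-1)/2}$ bound disposing of the range $k>(n-1)/2$; the constant $e/(e-1)$ then appears transparently as the geometric series $\sum_{k\ge0}e^{-k}$. Both arguments are elementary and of comparable length; yours makes the ``last term dominates'' mechanism and the origin of the factor $1/(e-1)$ more visible, while the paper's summation-by-parts route is the more generic device for sums of the form $\sum a(m)b(m)$ with slowly varying $b$. No gaps: the domination bound and the tail estimate you cite are exactly what is needed to legitimize the termwise limit.
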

\begin{proof}
We have
\begin{equation}\label{stir}
\sum_{m=1}^{n-1}\frac{m^m}{m!}\sim\frac1{\sqrt{2\pi}}\sum_{m=1}^{n-1}\frac{e^m}{\sqrt m},
\ \text{as}\  n\to\infty.
 \end{equation}
Let $a(m)=e^m$, $b(m)=m^{-\frac12}$ and $A(m)=\sum_{j=1}^ma(j)=\frac{e^{m+1}-e}{e-1}$, for $m\ge1$.
Recall the following summation by parts formula:
$$
\sum_{m=1}^{n-1}a(m)b(m)=\sum_{m=1}^{n-2}A(m)\big(b(m)-b(m+1)\big)+A(n-1)b(n-1).
$$
Thus we have
\begin{equation}\label{sumbyparts}
\sum_{m=1}^{n-1}\frac{e^m}{\sqrt m}=\frac{e^n-e}{(e-1)\sqrt{n-1}}+\sum_{m=1}^{n-2}\frac{e^{m+1}-e}{e-1}\big(\frac1{\sqrt{m}}-\frac1{\sqrt{m+1}}\big).
\end{equation}
Using  the mean value theorem, it follows that there exists a $C>0$, and for any $M\ge1$,   a $C_M>0$  such that
\begin{equation}\label{neglig}
\sum_{m=1}^{n-2}\frac{e^{m+1}-e}{e-1}\big(\frac1{\sqrt m}-\frac1{\sqrt{m+1}}\big)\le
C_M+\frac CM\sum_{m=M}^{n-2}\frac{e^m}{\sqrt m}
\end{equation}
From \eqref{sumbyparts} and \eqref{neglig} it follows that
\begin{equation}\label{almost}
\lim_{n\to\infty}\sqrt ne^{-n}\sum_{m=1}^{n-1}\frac{e^m}{\sqrt m}=\frac1{e-1}.
\end{equation}
Now \eqref{stir} and \eqref{almost} give
\begin{equation}\label{almost2}
\lim_{n\to\infty}\sqrt ne^{-n}\sum_{m=1}^{n-1}\frac{m^m}{m!}=\frac1{(e-1)\sqrt{2\pi}}.
\end{equation}
\end{proof}

We now consider successively each of the three parts of the theorem.

\it\noindent Proof of (i).\rm\
By Lemma \ref{mmm}, it follows that for any $\epsilon\in(0,1)$,
\begin{equation}\label{neg-eps}
\sum_{m=1}^{[(1-\epsilon)n]-2}\frac{m^m}{m!}=o(\sum_{m=1}^{n-1}\frac{m^m}{m!}),\ \text{as}\ n\to\infty;
\end{equation}
and thus, from Lemma \ref{mmm} again we have
\begin{equation}\label{rest-eps}
\sum_{m=[(1-\epsilon)n]-1}^{n-1}\frac{m^m}{m!}\sim\frac1{(e-1)\sqrt{2\pi}}\frac{e^n}{\sqrt n}, \ \text{as}\ n\to\infty.
\end{equation}
Also, $\max_{m\in\{j-1,\cdots, n-1\}}(\frac{m+1}m)^{j-1}=(\frac j{j-1})^{j-1}$, and  standard analysis shows that
$\sup_{j\ge2}(\frac j{j-1})^{j-1}=e$.
Thus, from \eqref{neg-eps} it follows that
\begin{equation}\label{estlast}
\sum_{m=j}^{n-1}(\frac{m+1}m)^{j-1}\frac{m^m}{m!}\sim\sum_{m=[(1-\epsilon)n]-1}^{n-1}(\frac{m+1}m)^{j-1}\frac{m^m}{m!} , \ \text{as}\ n\to\infty.
\end{equation}
Let  $j=b_nn$ with $\lim_{n\to\infty}b_n=b\in[0,1)$. Now substitute for $j$ in \eqref{estlast}.
Clearly,
$$
\begin{aligned}
&\lim_{\epsilon\to0}\liminf_{n\to\infty}\min_{m\in\{[(1-\epsilon)n]-1,\cdots, n-1\}}\big(\frac{m+1}m\big)^{b_nn-1}=\\
&\lim_{\epsilon\to0}\limsup_{n\to\infty}\max_{m\in\{[(1-\epsilon)n]-1,\cdots, n-1\}}\big(\frac{m+1}m\big)^{b_nn-1}=e^b.
\end{aligned}
$$
Using this along with \eqref{rest-eps} and \eqref{estlast}, we conclude that
\begin{equation}\label{keylasti}
\sum_{m=b_nn-1}^{n-1}(\frac{m+1}m)^{b_nn-1}\frac{m^m}{m!}\sim\frac{e^b}{(e-1)\sqrt{2\pi}}\frac{e^n}{\sqrt n}, \ \text{as}\ n\to\infty.
\end{equation}
Now \eqref{keylasti} and \eqref{comboformlast}
give
$$
p_n(\text{id},\{\sigma_n=b_nn\})\sim\frac1ne^{-n}\sqrt{2\pi n}\frac{e^b}{(e-1)\sqrt{2\pi}}\frac{e^n}{\sqrt n}=\frac1n\frac{e^b}{e-1},
\ \text{as}\ n\to\infty,
$$
which proves (i).
\medskip

\it\noindent Proof of (ii).\rm\ Let $j=b_nn$ with $\lim_{n\to\infty}b_n=1$ and $\lim_{n\to\infty}(n-b_nn)=\infty$.
We can rewrite $j$ in the form $j=n-\gamma_n$, where $\lim_{n\to\infty}\gamma_n=\infty$ and $\lim_{n\to\infty}\frac{\gamma_n}n=0$.
From \eqref{comboformlast} we have
\begin{equation}\label{ii}
p_n(\text{id},\{\sigma_n=b_nn\})=\frac{(n-1)!}{n^n}\sum_{m=n-\gamma_n-1}^{n-1}\big(\frac{m+1}m\big)^{n-\gamma_n-1}~\frac{m^m}{m!},
\end{equation}
For $m$ in the range appearing on the right hand side above, we have
$$
(\frac n{n-1})^{n-\gamma_n-1}\le (\frac{m+1}m)^{n-\gamma_n-1}\le (\frac{n-\gamma_n}{n-\gamma_n-1})^{n-\gamma_n-1},
$$
and both the left and the right hand sides above converge to  $e$ when $n\to\infty$.
Thus, from \eqref{ii}, we have
\begin{equation}\label{ii'}
p_n,\{\sigma_n=b_nn\})\sim \frac1ne^{-n}\sqrt{2\pi n}~e\sum_{m=n-\gamma_n-1}^{n-1}\frac{m^m}{m!}, \ \text{as}\ n\to\infty.
\end{equation}
By Lemma \ref{mmm}, $\sum_{m=1}^{n-1}\frac{m^m}{m!}\sim\frac1{(e-1)\sqrt{2\pi}}\frac{e^n}{\sqrt n}$ and
$\sum_{m=1}^{n-\gamma_n-1}\frac{m^m}{m!}\sim\frac1{(e-1)\sqrt{2\pi}}\frac{e^{n-\gamma_n}}{\sqrt{n-\gamma_n}}$,
as $n\to\infty$. By the assumption on  $\gamma_n$, the order of the latter term is smaller than
that of the former term; hence from Lemma \ref{mmm} again,
\begin{equation}\label{finalii}
\sum_{m=n-\gamma_n-1}^{n-1}\frac{m^m}{m!}\sim\frac1{(e-1)\sqrt{2\pi}}\frac{e^n}{\sqrt n},\ \text{as}\ n\to\infty.
\end{equation}
From \eqref{ii'} and \eqref{finalii}, it follows that
$$
p_n(\text{id},\{\sigma_n=b_nn\})\sim\frac1ne^{-n}\sqrt{2\pi n}~e~\frac1{(e-1)\sqrt{2\pi}}\frac{e^n}{\sqrt n}=\frac1n\frac e{e-1},
\ \text{as}\ n\to\infty,
$$
which proves (ii).
\medskip

\it\noindent Proof of (iii).\rm\ Now we let $j=n-l$ with $l\ge0$ fixed.
By \eqref{comboformlast}, we have
\begin{equation}
p_n(\text{id},\{\sigma_n=n-l\})=\frac{(n-1)!}{n^n}\sum_{m=n-l-1}^{n-1}\big(\frac{m+1}m\big)^{n-l-1}~\frac{m^m}{m!}.
\end{equation}
From this, it follows that
\begin{equation}\label{iii}
p_n(\text{id},\{\sigma_n=n-l\})\sim \frac1ne^{-n}\sqrt{2\pi n}~e\sum_{r=0}^l\frac{(n-r-1)^{n-r-1}}{(n-r-1)!}.
\end{equation}
We have $\frac{(n-r-1)^{n-r-1}}{(n-r-1)!}\sim\frac{e^{n-r-1}}{\sqrt{2\pi n}}$ as $n\to\infty$.
Thus, from \eqref{iii}, we conclude that
\begin{equation}
p_n(\text{id},\{\sigma_n=n-l\})\sim \frac1n\sum_{r=0}^le^{-r}=\frac1n\frac{e-e^{-l}}{e-1},
\end{equation}
which proves (iii).
\hfill $\square$
%\frac1{(e-1)\sqrt{2\pi}}\frac{e^n}{\sqrt n}, \ \text{as}\ n\to\infty

\section{Proof of Theorem \ref{TV}}
Let  $L,M>0$, with $L$ being an integer. In the calculations that follow, we will use the generic $P$ to denote probabilities of events
concerning the shuffling mechanism.
Let $B^{(n)}_M$
be the event that at least one out of the first $[Mn^\frac12]$ cards (that is, the cards numbered from 1 to  $[Mn^\frac12]$)
gets removed and reinserted in a position that is no greater than
$[Mn^\frac12]$.
Note that $P(B^{(n)}_M)=1-(1-\frac{[Mn^\frac12]}n)^{[Mn^\frac12]}$; so
\begin{equation}\label{TVfirst}
\lim_{n\to\infty}P(B^{(n)}_M)=1-e^{-M^2}.
\end{equation}
If the event $B^{(n)}_M$ occurs, let $j^{(n)}_M\le [Mn^\frac12]$ denote the number of the  card with the smallest number that gets
removed and reinserted in a position no greater than $[Mn^\frac12]$. For convenience, we define $j^{(n)}_M=\infty$
if the event $B^{(n)}_M$ does not occur; thus, $B^{(n)}_M=\{j^{(n)}_M\le [Mn^\frac12]\}$.

For $L<[Mn^\frac12]-1$, define $C^{(n)}_{M,L}$ to be  the event that no more than $L$ out of the first $[Mn^\frac12]$ cards
 are removed and reinserted in  a position
to the left of card number $2[Mn^\frac12]-L-1$ (by the restriction on $L$, card
number $2[Mn^\frac12]-[L]-1$  is guaranteed not to be among the first  $[Mn^\frac12]$ cards).
From the definitions, it is easy to see that
$$
P(C^{(n)}_{M,L})\ge P(X_{n,M,L}\le L),
$$
where $X_{n,M,L}\sim \text{Bin}([Mn^\frac12],\frac{2[Mn^\frac12]-L-2}n)$.
Since $EX_{n,M,L},\text{Var}(X_{n,M,L})\sim 2M^2$ as $n\to\infty$, it follows that
\begin{equation}\label{TVsecond}
\lim_{M\to\infty}\lim_{n\to\infty}P(C^{(n)}_{M,L(M)})=1,\ \text{if}\ L(M)\ge 3M^2.
\end{equation}

We claim that if $C^{(n)}_{M,L}$ occurs and  $j^{(n)}_M\le [Mn^\frac12]$,
then immediately after  card number $j^{(n)}_M$ is removed and reinserted,  there will be no more than
$L$ cards with numbers less than $j^{(n)}_M$ appearing to the left of card number $j^{(n)}_M$.
Indeed, assume to the contrary that at least $L+1$ cards with numbers
less than $j^{(n)}_M$ appear to the left of newly reinserted card number $j^{(n)}_M$.
But then since  $C^{(n)}_{M,L}$ has occurred, card
number $2[Mn^\frac12]-L-1$ is also necessarily to the left of newly reinserted card number
$j^{(n)}_M$. Since every  card with a number greater than
$[Mn^\frac12]$  has not yet been removed and reinserted, it follow that all these cards maintain their original relative order;
thus in fact all the cards from $[Mn^\frac12]+1$ up to
$2[Mn^\frac12]-L-1$ are to the left of newly reinserted card number $j^{(n)}_M$. We conclude that these
$[Mn^\frac12]-L-1$ cards as well as at least $L+1$ other cards
are to the left of newly inserted card number $j^{(n)}_M$; but this contradicts
the assumption that the position of card number $j^{(n)}_M$ is no greater than
$[Mn^\frac12]$.

If $j^{(n)}_M\le [Mn^\frac12]$, let $\text{pos}(j^{(n)}_M)$ denote its position immediately
after it is removed and reinserted. For the rest of this paragraph,
when we use the word ``now,'' we mean at the time immediately after card
$j^{(n)}_M$ is removed and reinserted. If $j^{(n)}_M\le[Mn^\frac12]$  and $C^{(n)}_{M,L}$ has occurred, then immediately after card number
 $j^{(n)}_M$ is removed and reinserted, it will find itself in position
 $\text{pos}(j^{(n)}_M)\le [Mn^\frac12]$, and the number of cards with lower numbers than $j^{(n)}_M$ that will be occupying
 positions to the left of  position $\text{pos}(j^{(n)}_M)$
will be between 0 and $L$; call this number $L'$.  All the cards with numbers higher than  $j^{(n)}_M$ will be in their original relative order;
thus,  $\text{pos}(j^{(n)}_M)-1-L'$ of them will be in positions to the left of  $\text{pos}(j^{(n)}_M)$, and
$n-j^{(n)}_M-\text{pos}(j^{(n)}_M)+1+L'$
of them will be in positions to the right of $\text{pos}(j^{(n)}_M)$.
Let $D^{(n)}_{M,L}$ denote the event that no more than $L$ out of these
 $\text{pos}(j^{(n)}_M)-1-L'$  cards that are now to the left of card $j^{(n)}_M$ in position  $\text{pos}(j^{(n)}_M)$ end up to the left of card $j^{(n)}_M$ after being removed and reinserted, and let
 $E^{(n)}_{M,L}$ denote the event that no more than $\rho L $ out of these
$n-j^{(n)}_M-\text{pos}(j^{(n)}_M)+1+L'$ cards that are now  to the right of card  $j^{(n)}_M$
in position $\text{pos}(j^{(n)}_M)$
 end up to the left of card $j^{(n)}_M$ after they are  finally removed and reinserted,  thereby ending the shuffle.
Here $\rho L$ is an integer which will be chosen later.

By looking at the worst case scenario (by choosing $L'=0$ and $\text{pos}(j^{(n)}_M)=[Mn^\frac12]$), it follows easily that
\begin{equation*}
P(D^{(n)}_{M,L}|C^{(n)}_{M,L}, B^{(n)}_M)\ge P(Y_{n,M,L}\le L),
\end{equation*}
where $Y_{n,M,L}\sim\text{Bin}([Mn^\frac12]-1,\frac{[Mn^\frac12]-1}n)$.
Since $EY_{n,M,L},\text{Var}(Y_{n,M,L})\sim M^2$ as $n\to\infty$,
it follows that
\begin{equation}\label{TVthird}
\lim_{M\to\infty}\lim_{n\to\infty}P(D^{(n)}_{M,L(M)}|C^{(n)}_{M,L(M)}, B^{(n)}_M)=1,\ \text{if}\ L(M)\ge 2M^2.
\end{equation}

Now we consider  $P(E^{(n)}_{M,L}|D^{(n)}_{M,L},C^{(n)}_{M,L}, B^{(n)}_M)$.
Conditioned on $B^{(n)}_{M},C^{(n)}_{M,L}$ and  $D^{(n)}_{M,L}$, when event
$D^{(n)}_{M,L}$ ends and event $E^{(n)}_{M,L}$ starts, the card
$j_M^{(n)}$ will be in a position between 1 and $2L+1$; call the position $k$.
Then the worst case scenario would be
to set $n-j^{(n)}_M-\text{pos}(j^{(n)}_M)+1+L'$  equal to $n-k$; that is, equal to the total number of cards
to the right of card $j^{(n)}_M$.
Thus a lower bound for $P(E^{(n)}_{M,L}|D^{(n)}_{M,L},C^{(n)}_{M,L}, B^{(n)}_M)$ is the
minimum over those $k$  between 1 and $2L+1$ of the
probability
that in a deck of $n$ cards ordered from 1 to $n$, if  one removes and randomly reinserts the last $n-k$ cards, then no more
than $\rho L$ of them get reinserted to the left of card $k$.
We can write these probabilities  in terms of  certain probabilities for  certain
geometric random variables. For any $i\ge 1$, let $T^{i}_{q_i}$ denote a geometric random variable with parameter $q_i$ and with values in $\{1,2,\cdots\}$,
and let $T^{i}_{q_i}$ and
$T^{j}_{q_j}$ be independent for $j\neq i$.
For a fixed $k$, the above probability is
$P(\sum_{l=0}^{\rho L}T_{\frac{k+l}n}>n-k)$.
To see this, think of the number of cards that are  removed and  randomly reinserted until the first time one  of them gets placed
to the left of card number $k$ as a $T^1_{\frac kn}$ random variable, think of
the number of  cards after the first one gets placed to the left of card number $j$ until a second one gets placed
to the left of card number $j$ as a $T^2_{\frac{k+1}n}$ random variable, etc.
(In fact, these numbers  are not distributed according to these random variables, because there are only
a finite number of cards. What is true precisely, for example, with regard to the first time a card gets
placed to the left of card number $k$ is that for $l\le n-k$,
the probability of needing exactly $l$ cards to be removed and reinserted until the first time one of them gets placed to the left of card number $k$
is equal to the probability that $T^1_{\frac kn}$ is equal to $l$.)

So we have
\begin{equation}\label{estE}
P(E^{(n)}_{M,L}|D^{(n)}_{M,L},C^{(n)}_{M,L}, B^{(n)}_M)\ge \min_{1\le k\le 2L+1}
P(\sum_{l=0}^{\rho L}T_{\frac{k+l}n}>n-k).
\end{equation}
Now for all $0\le k\le 2L+1$,
$$
E\sum_{l=0}^{\rho L}T_{\frac{k+l}n}=n\sum_{l=0}^{\rho L}\frac1{k+l}\ge n\log \frac{k+\rho L+1}k\ge
 n\log \frac{2L+2+\rho L}{2L+1},
$$
and
$$
\text{Var}(\sum_{l=0}^{\rho L}T_{\frac{k+l}n})\le  Cn^2,
$$
 for a constant
$C$ independent of $k$ and $L$.
Thus, by Chebyshev's inequality, for any $\lambda(L)$,
\begin{equation}\label{Cheb}
P(\sum_{l=0}^{\rho L}T_{\frac{k+l}n}\ge  n\log \frac{2L+2+\rho L}{2L+1}-n\lambda(L))\ge 1-\frac C{(\lambda(L))^2}.
\end{equation}
Choosing now  $\rho$ in the definition of $E^{(n)}_{M,L}$ sufficiently large so that
 $\log \frac{2L+2+\rho L}{2L+1}>2$,
 and letting $\lambda(L)=
\frac12\log \frac{2L+2+\rho L}{2L+1}$, it follows from \eqref{estE} and \eqref{Cheb} that
\begin{equation}\label{TVfourth}
\lim_{L\to\infty}\lim_{n\to\infty}P(E^{(n)}_{M,L}|D^{(n)}_{M,L},C^{(n)}_{M,L}, B^{(n)}_M)=1.
\end{equation}
If events  $B^{(n)}_{M},C^{(n)}_{M,L}$, $D^{(n)}_{M,L}$ and $E^{(n)}_{M,L}$ occur, then at the end of the shuffle,
card number $j^{(n)}_M\le [Mn^\frac12]$ will end up in a position between 1 and $2L+\rho L+1$.
Thus,  by \eqref{TVfirst}-\eqref{TVthird} and \eqref{TVfourth}, we conclude that
\eqref{TVcond} holds.

Finally, we have $U_n(A^{(n)}_{M,L})=1-\frac{\binom{n-[Mn^\frac12]}{L}}{\binom nL}$, from which it follows
that $\lim_{n\to\infty}U_n(A^{(n)}_{M,L})=0$. This in conjunction with \eqref{TVcond} proves \eqref{TV0}.
\hfill $\square$

\section{Proofs of Theorem \ref{general} and Corollaries \ref{joint}, \ref{pairs} and \ref{densityatpos}}
\noindent \it Proof of Theorem \ref{general}.\rm\
Let $b_n$  satisfy $\lim_{n\to\infty}b_n=b\in[0,1]$ with $b_nn$ an integer,  and let
 $d_n$  satisfy $\lim_{n\to\infty}d_n=d\in[0,1]$, with $d_nn$ an integer.
 As in the proof of Theorem \ref{TV}, we  use the generic $P$ to denote probabilities of events
concerning the shuffling mechanism.
 Let $Q^{(n)}_{b_n,d_n}(x)$, $0\le x\le 1$, denote the rescaled distribution function of $\sigma^{-1}_{b_nn}$ under $p_n(\text{id},\cdot)$, when conditioned on
 card $b_nn$ having been removed and reinserted in position $d_nn$;
 that is $Q^{(n)}_{b_n,d_n}(x)=p_n(\text{id},\sigma^{-1}_{b_nn}\le nx|\text{card}\ b_nn\ \text{was reinserted in position}\ d_nn)$.
 Let $G_b(y)$ be as in the statement of the theorem.
 We will show that
   the distribution $Q^{(n)}_{b_n,d_n}(dx)$ corresponding to the distribution function  $Q^{(n)}_{b_n,d_n}(x)$ converges weakly to the $\delta$-measure
   at $G_b(d)$:
 \begin{equation}\label{conddist}
w-\lim_{n\to\infty}Q^{(n)}_{b_n,d_n}(dx)=\delta_{G_b(d)}.
 \end{equation}
It is easy to check that the function $G_b(y)=e^{(1-y)e^{-b}}-(1-y)e^{1-b}$ is increasing in $y\in[0,1]$. Thus, since
the probability that card $b_nn$ is inserted in a position no larger than $d_n n$ is $d_n$,
it follows that $F_b(G_b(d))=d$; that is, $G_b=F_b^{-1}$.
Thus, to complete the proof of the theorem, we need to prove \eqref{conddist}.

For notational convenience, we will sometimes write $j=b_nn$ and $k=d_nn$.
After card number $j$ is removed and reinserted in  position $k$, a certain number of cards
from among those with numbers less than $j$ (which were removed and reinserted before $j$ was) will be to the left
of newly reinserted card number $j$. Denote this random number of cards by $M$.
Of course then, the other cards to the left of newly reinserted card number $j$
are the cards $j+1,\cdots, j+k-1-M$. These cards are the next to be removed and reinserted.
Let $R$ denote the random number of cards out of these $k-1-M$ cards that  end up to the left of card number $j$.
So now card $j$ is in position $M+R+1$. Now it is the turn of the remaining $n-j-k+M+1$ cards, with numbers from
$j+k-M$ up to $n$,  all of which are to the right
of card number $j$, to be removed and reinserted. Let  $S$ denote the random number of cards  out of these cards that end up to the left of card number $j$.
Then at the end of the shuffle, card number $j$ will be in position $M+R+S+1$.

We will show that as $n\to\infty$,  the distribution of $\frac Mn$ converges to $\delta_{\gamma(b,d)}$,
where
\begin{equation}\label{gamma}
\gamma=\gamma(b,d)=\begin{cases} b-(1-d)(1-e^{-b}),\ & \text{if}\ d\ge 1-(1-b)e^b;\\ d,\ & \text{if}\  d\le 1-(1-b)e^b.\end{cases}
\end{equation}
 We will show that as $n\to\infty$,  the distribution of $\frac Rn$ converges to
$\delta_{1-\gamma-(1-d)e^{d-\gamma}}$. Let $t=t(\gamma,d)=1-\gamma-(1-d)e^{d-\gamma}$. We will
show that as $n\to\infty$,  the distribution of $\frac Sn$ converges to
$\delta_{(\gamma+t)(e^{1-b-d+\gamma}-1)}$. Thus $Q^{(n)}_{b_n,d_n}$, the rescaled distribution of the final position of card $j$, namely,
the distribution of $\frac{M+R+S+1}n$,  will converge to $\delta_{\gamma+t+(\gamma+t)(e^{1-b-d+\gamma}-1)}$.
 Using the equations above to write everything only in terms of $b$ and $d$, we obtain
\begin{equation*}
\gamma+t+(\gamma+t)(e^{1-b-d+\gamma}-1)=\begin{cases}
de^{1-b},\ & \text{if}\ d\le1-(1-b)e^b;\\
e^{(1-d)e^{-b}}-(1-d)e^{1-b},\ & \text{if}\ d\ge 1-(1-b)e^b,\end{cases}
\end{equation*}
thus giving
\eqref{conddist}.

We now prove the claims in the above paragraph regarding the distributions of $\frac Mn$, $\frac Rn$ and $\frac Sn$.
We start with $\frac Mn$.
A careful analysis of the shuffle up until the time that card number $j$ is removed and reinserted in position $k$   will reveal
that if $j\le k$ and  $0\le m\le j-1$, or if $k<j$ and $0\le m\le k-2$,
then the random variable $M$ will be equal to $m$ if and only if
at least $m$ cards from among the first $j-1$ cards were inserted to the left of card number $j+k-m$, and at most
$m$ cards from among the first $j-1$ cards were inserted to the left of card number $j+k-m-1$.
However if   $k<j$ and $m=k-1$,  then the random variable $M$ will be equal to $m=k-1$  if and only
if at least $m=k-1$ out of the first $j-1$ cards were inserted to the left of card number $j+1$.

We can write the probabilities of the events described above   in terms of  certain probabilities for  certain
geometric random variables. For any $i\ge 1$, let $T^{i}_{q_i}$ denote a geometric random variable with parameter $q_i$ and with values in $\{1,2,\cdots\}$,
and let $T^{i}_{q_i}$ and
$T^{j}_{q_j}$ be independent for $j\neq i$.
Let $A_{n,j,k;m}$ denote the event that at least $m$ cards  from among the first $j-1$ cards were inserted to the left of card number $j+k-m$
(with $m$ in the range noted above).
Then
\begin{equation}\label{A}
P(A_{n,j,k;m})=P(\sum_{l=1}^{j-m}T^{l}_{1-\frac{j+k-m-l}n}> j-1).
\end{equation}
The explanation for this is similar to  that given at the point in the proof of Theorem \ref{TV} where geometric random variables were introduced.
(Think of the number of cards that are removed and reinserted until the first time one of them gets placed to the \it right\rm\ of card
number $j+k-m$ as a $T^{1}_{1-\frac{j+k-m-1}n}$ random variable, think of the number of cards that are removed and reinserted after
the first one gets placed to the right of card number $j+k-m$ until a second one gets placed to the right of card number
$j+k-m$ as a $T^{2}_{1-\frac{j+k-m-2}n}$, etc., with the same caveat as noted in the proof
of Theorem \ref{TV}.)

Letting $B_{n,j,k;m}$ denote the event that
at most
$m$ cards from among the first $j-1$ cards were inserted to the left of card number $j+k-m-1$
(with $m$ in the range noted above), we have similarly
\begin{equation}\label{B}
P(B_{n,j,k;m})=P(\sum_{l=1}^{j-m-1}T^{l}_{1-\frac{j+k-m-l}n}\le j-1).
\end{equation}
For $k<j$, letting $C_{n,j,k}$ denote the event that
at least $k-1$ out of the first $j-1$ cards were inserted to the left of card number $j+1$,
we have similarly
\begin{equation}\label{C}
P(C_{n,j,k})=P(\sum_{l=0}^{j-k}T^{l}_{1-\frac{j-l}n}> j-1).
\end{equation}

Recall that $j=b_nn$ and $k=d_nn$. Write
$m$ in the form $m=\gamma_n n$ and assume that $\gamma=\lim_{n\to\infty} \gamma_n$ exists. By the restrictions
on $m$, we can assume that $b\ge\gamma$. Then, by the law of large numbers if $b>\gamma$, and trivially if $b=\gamma$, it follows that
$\frac1n\sum_{l=1}^{j-m}T^{l}_{1-\frac{j+k-m-l}n}$
  converges almost surely to its  limiting mean value.
The mean of the sum is $\frac1n\sum_{l=1}^{(b_n-\gamma_n)n}\frac1{1-b_n-d_n+\gamma_n+\frac ln}$; thus the  limiting mean value
is $\int_0^{b-\gamma}\frac1{1-b-d+\gamma+x}dx=\log\frac{1-d}{1-d-b+\gamma}$.
On the other hand, $\lim_{n\to\infty}\frac{j-1}n=b$.
Thus, we conclude from \eqref{A} that
\begin{equation}\label{Aagain}
\lim_{n\to\infty}P(A_{n,j,k;m})=
\begin{cases} 1,\ \text{if}\  \log\frac{1-d}{1-d-b+\gamma}>b;\\
0, \ \text{if} \ \log\frac{1-d}{1-d-b+\gamma}<b.\end{cases}
\end{equation}
Making the same type of argument for \eqref{B} and \eqref{C},
we obtain
\begin{equation}\label{Bagain}
\lim_{n\to\infty}P(B_{n,j,k;m})=
\begin{cases} 1,\ \text{if}\  \log\frac{1-d}{1-d-b+\gamma}<b;\\
0, \ \text{if} \ \log\frac{1-d}{1-d-b+\gamma}>b,\end{cases}
\end{equation}
and
\begin{equation}\label{Cagain}
\lim_{n\to\infty}P(C_{n,j,k})=
\begin{cases} 1,\ \text{if}\  \log\frac{1-d}{1-b}>b;\\
0, \ \text{if} \ \log\frac{1-d}{1-b}<b.\end{cases}
\end{equation}

Consider first the case that $j\le k$.
Recalling that $M=m=\gamma_n n$ occurs if and only if $A_{n,j,k;m}$ and $B_{n,j,k;m}$ occur, it follows from  \eqref{Aagain} and \eqref{Bagain}
that the distribution of $\frac Mn$ converges to the $\delta$-measure
at the $\gamma$ which solves the equation  $\log\frac{1-d}{1-d-b+\gamma}=b$. The solution is
$\gamma=b-(1-d)(1-e^{-b})$.

Now consider the case  that $k<j$. Note that in this case, $m\le k-1$, which means that necessarily $\gamma\le d$.
First consider the case $0\le m\le k-2$.
Since  $M=m=\gamma_n n$ occurs if and only if
$A_{n,j,k;m}$ and $B_{n,j,k;m}$ occur, we again
conclude that $\frac Mn$ converges to the $\delta$-measure
at  $\gamma=b-(1-d)(1-e^{-b})$, as long as the right hand side is indeed no greater than $d$.
One finds that the right hand side is no greater than $d$ if and only if
$d\ge 1-(1-b)e^b$. If the opposite inequality holds, then we could  conclude by process
of elimination that
$\frac Mn$ converges to the $\delta$-measure at $\gamma=d$.
However, working directly, we recall that $M=k-1$ occurs if and only if
$C_{n,j,k}$  occurs. Solving the inequality
$\log\frac{1-d}{1-b}>b$ gives $d<1-(1-b)e^b$; thus, we conclude  from \eqref{Cagain} that
$\frac Mn$ converges to the $\delta$-measure at $\gamma=d$ if
$d<1-(1-b)e^b$. This completes the proof that the distribution of $\frac Mn$ converges to $\delta_{\gamma(b,d)}$,
where $\gamma(b,d)$ is given by \eqref{gamma}.

Now we turn to the distribution of $\frac Rn$.  Recall that as we begin to implement the random variable $R$,
card number $j$ is in position $k$, to the left of card number $j$ are $M$ cards that have already been removed and reinserted,
as well as $k-M-1$ cards that are now to be removed and reinserted. The random variable $R$ is the number of these
$k-M-1$ cards that end up to the left of card number $j$.
Using  geometric random variables, similar to the case for the random variable $M$, we have
\begin{equation*}
P(\frac Rn\le t|M=\lambda n)=P(\sum_{l=1}^{k-\lambda n-1-tn}T^{l}_{1-\frac{k-l}n}\le k-\lambda n-1).
\end{equation*}
By the law of large numbers if $d>\lambda+t$, and trivially if $d=\lambda+t$, it follows that
 $\frac1n\sum_{l=1}^{k-\lambda n-1-tn}T^{l}_{1-\frac{k-l}n}$ converges almost surely
to its limiting mean value, which is $\int_0^{d-\lambda-t}\frac1{1-d+x}dx=\log \frac{1-\lambda-t}{1-d}$.
On the other hand $\lim_{n\to\infty}\frac{k-\lambda n-1}n=d-\lambda$. Thus, we conclude that
\begin{equation*}
P(\frac Rn\le t|M=\lambda n)=
\begin{cases} 1,\ \text{if}\ \log \frac{1-\lambda-t}{1-d}<d-\lambda;\\
0, \ \text{if}\ \log \frac{1-\lambda-t}{1-d}>d-\lambda.
\end{cases}
\end{equation*}
This proves that the distribution of $\frac Rn$, conditioned on $M=\lambda n$,  converges to the $\delta$-measure
at the $t=t(\lambda,d)$ which solves the equation $\log \frac{1-\lambda-t}{1-d}=d-\lambda$.
The solution is $t(\lambda,d)=1-\lambda-(1-d)e^{d-\lambda}$.
Since the distribution of $\frac Mn$ converges to the $\delta$-measure at $\gamma=\gamma(b,d)$ given in \eqref{gamma},
and since the distribution of $\frac Rn$, conditioned on $\frac Mn=\gamma$, converges to the $\delta$-measure at
$t(\gamma,d)=1-\gamma-(1-d)e^{d-\gamma}$, we conclude that
the distribution of $\frac Rn$ converges to the $\delta$-measure at $t=t(\gamma,d)=1-\gamma-(1-d)e^{d-\gamma}$, with
$\gamma=\gamma(b,d)$.

We now turn  to the distribution of $\frac Sn$.
Recall that as we begin to implement the random variable $S$, card number $j$ is in position $M+R+1$, and there
are $n-j-k+M+1$ cards, all to the right of card number $j$, which need to be removed and reinserted.
The random variable $S$ is the number of these $n-j-k+M+1$ cards that end up to the left of card $j$.
Using geometric random variables again, we have
\begin{equation*}
P(\frac Sn\le v|M=\lambda n, R=\mu n)=P(\sum_{l=1}^{vn+1}T^l_{\lambda +\mu +\frac ln}> n-j-k+\lambda n+1).
\end{equation*}
By the law of large numbers if $v>0$, and trivially if $v=0$, it follows that $\frac1n\sum_{l=1}^{vn+1}T^l_{\lambda +\mu +\frac ln}$
converges almost surely to its limiting mean value, which is $\int_0^v\frac1{\lambda+\mu+x}dx=\log\frac{\lambda+\mu+v}{\lambda+\mu}$.
On the other hand, $\lim_{n\to\infty}\frac{n-j-k+\lambda n+1}n=1-b-d+\lambda$. Thus, we conclude that
\begin{equation*}
P(\frac Sn\le v|M=\lambda n, R=\mu n)=
\begin{cases} 1,\ \text{if}\ \log\frac{\lambda+\mu+v}{\lambda+\mu}>1-b-d+\lambda;\\
0,\ \text{if}\ \log\frac{\lambda+\mu+v}{\lambda+\mu}<1-b-d+\lambda.
\end{cases}
\end{equation*}
This proves that the distribution of
$\frac S n$, conditioned on $M=\lambda n$ and $R=\mu n$, converges  to the $\delta$-measure at the $v=v(\lambda, \mu)$ which solves
the equation $\log\frac{\lambda+\mu+v}{\lambda+\mu}=1-b-d+\lambda$.
The solution is $v=v(\lambda,\mu,b,d)=(\lambda+\mu)(e^{1-b-d+\lambda}-1)$.
Since the distribution of $\frac Mn$ converges to the $\delta$-measure at $\gamma=\gamma(b,d)$,
and since the distribution of $\frac Rn$ converges to the $\delta$-measure at
$t=t(\gamma,d)$,
it follows that the distribution of $\frac Sn$ converges to the $\delta$-measure at $v(\gamma,t,b,d)
=(\gamma+t)(e^{1-b-d+\gamma}-1)$, with $\gamma=\gamma(b,d)$ and $t=t(\gamma,d)$.
\hfill $\square$
\medskip

\noindent \it Proof of Corollary \ref{joint}.\rm\ The proof of Theorem \ref{general} shows that
with regard to the position of a particular card at the end of the shuffle,
the only randomness that remains when $n\to\infty$ is the randomness incurred by
removing and reinserting that particular card, and not the randomness incurred by removing and reinserting other cards.
Furthermore, as is clear intuitively and also from the above proof, a finite number of changes with regard to the positions of other
cards does not change the limiting distribution of the card in question. The corollary follows from these facts.
\hfill $\square$
\medskip

\noindent \it Proof of Corollary \ref{pairs}.\rm\
First we prove part (i).
Since $P(\Sigma^{-1}_{1,b_1}\le \Sigma^{-1}_{2,b_1})=\frac12$, to prove part (i) it suffices  to show that
$\frac{dP(\Sigma^{-1}_{1,b_1}\le \Sigma^{-1}_{2,b_2})}{db_2}\mid_{b_2=b_1}=(1-b_1)e^{b_1}-\frac12$.
We have
\begin{equation}\label{pairsform}
P(\Sigma^{-1}_{1,b_1}\le \Sigma^{-1}_{2,b_2})=\int_{0\le x\le y\le 1}f_{b_1}(x)f_{b_2}(y)dydx=\int_0^1f_{b_1}(x)(1-F_{b_2}(x))dx.
\end{equation}
From the equation $G_b(F_b(x))=x$, we obtain
$$
\frac{d F_b}{d b}(x)=-\frac{\frac{d G_b}{d b}(F_{b}(x))}{G'_{b}(F_{b}(x))}.
$$
Differentiating \eqref{pairsform} with respect to $b_2$ and using the above equation along with the fact that
$f_{b}(x)=\frac1{G'_{b}(F_{b}(x))}$, we have
\begin{equation*}
\frac{dP(\Sigma^{-1}_{1,b_1}\le \Sigma^{-1}_{2,b_2})}{db_2}\mid_{b_2=b_1}=\int_0^1\frac{\frac{d G_{b}}{d b}\mid_{b=b_1}(F_{b_1}(x))}
{\left(G'_{b_1}(F_{b_1}(x))\right)^2}dx.
\end{equation*}
Making the substitution $x=G_{b_1}(y)$ in the above equation, we obtain
\begin{equation}\label{workableequ}
\frac{dP(\Sigma^{-1}_{1,b_1}\le \Sigma^{-1}_{2,b_2})}{db_2}\mid_{b_2=b_1}=\int_0^1\frac{\frac{d G_{b}}{d b}\mid_{b=b_1}(y)}
{G'_{b_1}(y)}dy.
\end{equation}
Recalling the definition of $G_b$ from Theorem \ref{general}, we have
$$
\frac{d G_{b}}{d b}=\begin{cases} -ye^{1-b},\ &  0\le y\le 1-(1-b)e^b;\\
(1-y)\left(e^{1-b}-e^{-b}e^{(1-y)e^{-b}}\right),\ & 1-(1-b)e^b\le y\le 1,\end{cases}
$$
and
$$
G'_b(y)=\begin{cases} e^{1-b},\ & 0\le y\le 1-(1-b)e^b;\\
e^{1-b}-e^{-b}e^{(1-y)e^{-b}},\ & 1-(1-b)e^b\le y\le 1.\end{cases}
$$
Note then that the quotient $\frac{\frac{d G_{b}}{d b}\mid_{b=b_1}(y)}{G'_{b_1}(y)}$
reduces to $-y$ on $0\le y\le 1-(1-b_1)e^{b_1}$, and reduces to $(1-y)$ on
$1-(1-b_1)e^{b_1}\le y\le 0$.
Thus, from \eqref{workableequ}, we obtain
\begin{equation*}
\begin{aligned}
&\frac{dP(\Sigma^{-1}_{1,b_1}\le \Sigma^{-1}_{2,b_2})}{db_2}\mid_{b_2=b_1}=-\int_0^{ 1-(1-b_1)e^{b_1}}ydy+\int_{ 1-(1-b_1)e^{b_1}}^1(1-y)dy=\\
&(1-b_1)e^{b_1}-\frac12.
\end{aligned}
\end{equation*}

Now we prove part (ii). Recall that $f_1(x)\equiv1$. Thus,
$$
P(\Sigma^{-1}_{1,b}\le \Sigma^{-1}_{2,1})=\int_{0\le x\le y\le 1}f_{b}(x)dydx=\int_0^1(1-x)f_b(x)dx=1-E(b),
$$
where $E(b)$ is as in Corollary \ref{expectation}. Furthermore, from that corollary, it follows that $E(b)>\frac12$ for
$b\in(\tilde b,1)$ and $E(b)<\frac12$, for $b\in[0,\tilde b)$, where $\tilde b$ is the unique $b\in[0,1)$ for which $E(b)=\frac12$.
\hfill $\square$
\medskip

\noindent \it Proof of Corollary \ref{densityatpos}.\rm\
Since $h_x(b)=f_b(x)$, the statements regarding $h_0(b)$ and $h_1(b)$ as well as   (iii) and (iv)    follow from Corollary \ref{density} and the definition of $b_x$.
For part (i), we have $f_0(x)=\frac1{G'_0(G^{-1}_0(x))}$, and
$G'_0(y)=e-e^{1-y}$. Note  that  $G_0(y)$ and $x_b$ are the  same function (one of  $y$ and one of  $b$).
Thus, $h_x(0)=f_0(x)=\frac1{e-e^{1-b_x}}=  \frac{e^{b_x-1}}{e^{b_x}-1}$.

The proof of part (ii) requires long, tedious calculations. One begins by differentiating the equation $G_b(F_b(x))=x$
with respect to $b$, thus obtaining $\frac{dF_b(x)}{db}=-\frac{\frac{dG_b}{db}(G^{-1}_b(x))}{G'_b(G^{-1}_b(x))}$.
Differentiating this new equation with respect to $x$, one obtains
\begin{equation}\label{firstderiv}
\frac{df_b(x)}{db}=\frac{\frac{\frac{dG_b}{db}(G^{-1}_b(x))G''_b(G^{-1}_b(x))}{G'_b(G^{-1}_b(x))}-\frac{dG_b'}{db}(G^{-1}_b(x))}
{(G'_b(G_b^{-1}(x)))^2}.
\end{equation}
Using the formulas for $G_b=G_b(y)$ and its derivatives
in the range $1-(1-b)e^b\le y\le 1$, substituting in \eqref{firstderiv} and making a number of cancelations, one obtains
\begin{equation}\label{firstderivagain}
\frac{df_b(x)}{db}=\frac{e^{-b}(e-e^{(1-G^{-1}_b(x))e^{-b}})}{(G'_b(G_b^{-1}(x)))^2},\  0<b<b_x.
\end{equation}
This shows that the density $h_x(b)=f_b(x)$ is increasing for $0<b<b_x$.
Differentiating  \eqref{firstderivagain} with respect to $b$, and again
using the formulas for $G_b=G_b(y)$ and its derivatives
in the range $1-(1-b)e^b\le y\le 1$, and making a lot of cancelations,
one finally arrives at the formula
$$
\frac{d^2f_b(x)}{db^2}=\frac{e^{-3b}(e-e^{(1-G^{-1}_b(x))e^{-b}})^3}{(G'_b(G^{-1}_b(x)))^4}, \ 0<b<b_x.
$$
This shows that the density  $h_x(b)=f_b(x)$ is convex for  $0<b<b_x$.
\hfill $\square$
\section{Proof of Theorem \ref{basic}}
\noindent \it Proof of Theorem \ref{basic}.\rm\
To prove the theorem, we will need to  consider  a related shuffle.
Fix two (not necessarily distinct) permutations $\sigma,\tau\in S_n$.
Start the deck from $\sigma$ and then use $\tau$ in the following manner to remove and randomly
reinsert each card exactly once: for each $j=1,\cdots, n$, the $j$-th card to be removed  and randomly reinserted is
 the card with the number $\tau_j$ on it.
Let $p_n^{\tau}(\sigma,\cdot)$ denote the resulting distribution. (Note that in terms of these shuffles,
we have $p_n(\sigma,\cdot)=p^\sigma(\sigma,\cdot)$; in particular, $p_n(\text{id},\cdot)=p^\text{id}(\text{id},\cdot)$.)
Let $\text{id}^\text{opp}$ denote the permutation in $S_n$ satisfying $\text{id}^\text{opp}_j=n-j+1$, $j=1,\cdots, n$.
Note then that $p_n^{\text{id}^\text{opp}}(\sigma,\text{id})$ is the probability
of ending up with the identity permutation, if one starts from $\sigma$ and  removes and reinserts the cards one by one,
in the order $n, n-1,\cdots, 1$.

There are $n^n$ possible ways to implement the $p_n( \text{id},\cdot)$  card-cyclic to random insertion shuffle
since each of the $n$ cards is removed once and reinserted in one of $n$ positions. The number of ways that result in the permutation
$\sigma$ is thus $n^np_n( \text{id},\sigma)$. By ``undoing'' any such way, we get a one to one correspondence between
the ways of going from id to $\sigma$ using our original  shuffle, which removes and reinserts the cards in the order
$1,2,\cdots, n$,
and the ways of going from $\sigma$ to id using the shuffle which removes and reinserts the cards in the order
$n,n-1,\cdots, 1$.
Thus, we conclude that
\begin{equation}\label{equiv}
p_n(\text{id},\sigma)=p_n^{\text{id}^\text{opp}}( \sigma, \text{id}).
\end{equation}

We will now calculate $p_n^{\text{id}^\text{opp}}(\sigma, \text{id})$.
The cards begin in the order $\sigma$. Card number $n$  is removed first and randomly reinserted, then card  number $n-1$, etc.
There are $n^n$ different ways of implementing this, and we need  to know how many of these ways
will result in
the cards ending up   in the order id. For any such way, we construct a path
$\{W_j\}_{j=1}^n$ as follows. For each $j\in[n]$, let $W_j$ denote the position in which  card number $j$ was inserted.
It is clear that if the cards are to end up in the order id, then we need $W_j\le W_{j+1}$ for all $j$. However
sometimes this is not enough and we will need instead $W_j<W_{j+1}$.
To see when we only need $W_j\le W_{j+1}$ and when we need $W_j<W_{j+1}$, consider the state of the cards
after the cards numbered $n$ down to $n-j+1$ have been reinserted in such a way that they appear in increasing order from left to right.
The current position of  card number $n-j+1$ is by definition $W_{n-j+1}$. To the right of position $W_{n-j+1}$ one finds
all the cards numbered $n$ down to $n-j+2$. If  card number $n-j$ is also to the right of position $W_{n-j+1}$, then when it
is removed and reinserted in a position which we call $W_{n-j}$,  it will find itself to the left of  card number $n-j+1$ if and only if
$W_{n-j}\le W_{n-j+1}$. However, if  card number $n-j$ is to the left of position $W_{n-j+1}$, then when it is removed and reinserted
in a position which we call $W_{n-j}$, it will find itself to the left of card number $n-j+1$ if and only if
$W_{n-j}<W_{n-j+1}$.

Now given $W_{n-j+1}$, in fact we know to which side of $W_{n-j+1}$  card number $n-j$ is to be found. Recall that $I_{n-j}(\sigma)$ is the number
of inversions involving card number $n-j$ and a card with a lower number. Since none of the cards with
a number lower than or equal to   $n-j$ have been moved yet, it follows that these $I_{n-j}(\sigma)$ cards are to the right of  card number $n-j$,
%while the other $n-j-1-I_{n-j}(\sigma)$ cards numbered lower than $n-j$ are to the left of  card number  $n-j$.
Furthermore, as noted, all of the cards numbered from $n$ down  to $n-j+2$ are in positions to the right of  $W_{n-j+1}$, and  card number $n-j+1$ is in
position $W_{n-j+1}$.
From this is follows that  card number $n-j$ will
 find  itself to the left of position $W_{n-j+1}$
if and only if
$(n-j-1-I_{n-j}(\sigma))+1\le W_{n-j+1}-1$, or equivalently if and only
if
$ n-j-I_{n-j}(\sigma)< W_{n-j+1}$.

So we conclude that in order for the cards to end up in order id, it is necessary and sufficient that
$\{W_{n-j}\}_{j=0}^{n-1}$ satisfy $W_{n-j}\le W_{n-j+1}$, with strict inequality holding if
$ n-j-I_{n-j}(\sigma)< W_{n-j+1}$. By induction starting with $n$ and descending, it follows that
$W_{n-j}\le n-j$, for all $j=0,\cdots, n-1$; in particular, $W_1=1$.

Now define $Y_j=n+1-W_{n-j+1}$, $j=1,\cdots, n$.
We have $Y_j\le Y_{j+1}$.
In terms of $\{Y_j\}_{j=1}^n$, in order for
the cards to end up in order id, it is necessary and sufficient that
$\{Y_j\}_{j=1}^n$ satisfy $Y_j\le Y_{j+1}$, with strict inequality holding if
$Y_j\le j+I_{n-j}\equiv l_j(\sigma)$.
We have thus established a one-to-one correspondence between the number of ways
 of implementing the shuffle according to $p_n^{\text{id}^\text{opp}}(\sigma,\cdot)$
and  ending up with the cards in the order id, and the number of nondecreasing $l(\sigma)$-paths of length $n$.
The number of such paths has been denoted by $N_n(l(\sigma))$; thus
we conclude that $p_n^{\text{id}^\text{opp}}(\sigma,\text{id})=\frac{N_n(l(\sigma))}{n^n}$, and by \eqref{equiv}, we also
have $p_n( \text{id},\sigma)=\frac{N_n(l(\sigma))}{n^n}$.
\hfill $\square$

\section{Proof of Theorem \ref{estimate}}
Since we know that $N_n(l)$ is strictly monotone in $l$, it suffices to show that
$N_n(n-1,\cdots, n-1)=2^{n-1}$ and  that $N_n(1,2,\cdots, n)=\frac1{n+1}\binom{2n}n$.

For $k\in[n]$, there is a one-to-one correspondence between paths $\{Z_i\}_{i=1}^k$ satisfying
$1\le Z_1<Z_2<\cdots<Z_k=n$ and
 solutions $(a_1,\cdots, a_k)$   with positive integral entries to $\sum_{i=1}^ka_i=n$.
The correspondence is given by $a_1=Z_1$ and $a_i=Z_i-Z_{i-1}$, for $i=2,\cdots, k$.
As is well known, the number of such solutions is
$\binom{n-1}{k-1}$ \cite{F}.
Now a path
 $\{Y_i\}_{i=1}^n$ is a  nondecreasing $l$-path of length $n$
 with $l=(n-1,\cdots, n-1)$ if and only if  there exists a $k\in[n]$ such that
 $Y_j=n$ for $j\ge k$ and such that $1\le Y_1<\cdots<Y_k$.
 For any fixed $k$ the number of such paths was just shown to be
$\binom{n-1}{k-1}$.
Thus $N_n(l-1,\cdots, l-1)=\sum_{k=1}^n\binom{n-1}{k-1}=2^{n-1}$.

We claim that for $l=(1,2,\cdots, n-1)$,  there is a one-to-one correspondence between nondecreasing $l$-paths of length $n$
and   Dyck paths of length $2n$. Recall that a Dyck path of length $2n$ is a path
$\{Z_i\}_{i=0}^{2n}$ satisfying  $Z_0=Z_{2n}=0, Z_j\ge0$  and $|Z_j-Z_{j-1}|=1$, for all $j\in[2n]$.
 As is well known the Catalan number $C_n=\frac1{n+1}\binom{2n}n$ gives the number of such Dyck paths \cite{W}.
 It remains to show the correspondence.
 A Dyck path can be represented as a string of $2n$ bits, $n$ of which are labeled $H$ and
 $n$ of which are labeled $T$,  and such that starting to count from the left,
  at no intermediate stage are there fewer $H$'s than $T$'s.
   Let $\{Y_i\}_{i=1}^n$ be a nondecreasing $l$-path of length $n$ corresponding
 to $l=(1,2,\cdots, n-1)$.
 Now we map this path to the Dyck path which begins with $Y_1$ consecutive $H$'s, then has one $T$, then has
 $Y_2-Y_1$ consecutive $H$'s, then one $T$,  then $Y_3-Y_2$ consecutive $H$'s, then one $T$, and continues in this way
 until  ending  with $Y_n-Y_{n-1}$ consecutive $H$'s and
 one $T$.
 Recalling that by  definition, $Y_i\ge i$ and that  $Y_{i+1}$ is allowed to be equal to $Y_i$ whenever $Y_i>i$,
 it is easy to see that this gives the appropriate one-to-one correspondence.
 \hfill $\square$

\end{document}